\author [ Bhowmik]{  Mainak Bhowmik}
\address{Department of Mathematics, 	Indian Institute of Science, 
	Bangalore 560012, India\\
	mainakb@iisc.ac.in}
\author [Kumar]{Poornendu Kumar}
\address{Department of Mathematics, University of Manitoba, Winnipeg R3T 2N2, Canada \\
poornendu.kumar@umanitoba.ca}
\newcommand{\cA}{{\mathcal A}}
\newcommand{\cB}{{\mathcal B}}
\newcommand{\cC}{{\mathcal C}}
\newcommand{\cF}{{\mathcal F}}
\newcommand{\cH}{{\mathcal H}}
\newcommand{\cN}{{\mathcal N}}
\newcommand{\cS}{{\mathcal S}}
\newcommand{\bT}{{\mathbb{T}}}
\newcommand{\bD}{{\mathbb D}}
\newcommand{\bG}{{\mathbb G}}
\newcommand{\bC}{{\mathbb C}}
\newtheorem{thm}{Theorem}[section]
\newtheorem{lemma}[thm]{Lemma}
\newtheorem{proposition}[thm]{Proposition}
\theoremstyle{plain}
\newtheorem{definition}[thm]{Definition}
\newtheorem{remark}[thm]{Remark}
\theoremstyle{definition}
\newcounter{tmp}
\numberwithin{equation}{section}
\begin{document}
	\title{Function theory on  quotient domains related to the polydisc}

	\thanks{2020 {\em MSC}: 32A17, 46J20, 32E30, 47A56, 58D19.\\
		{\em Keywords and phrases}: Pseudo-reflection groups, Rational inner functions, \v Silov boundaries, Quotient domains,  test functions, Carath\'eodory approximation.
	}

	\maketitle
	\begin{abstract}

		Inner functions are the backbone of holomorphic function theory. This paper studies the inner functions on quotient domains of the open unit polydisc, $\bD^d$, arising from the group action of finite pseudo-reflection groups. Such quotient domains are known to be biholomorphic to the proper image $\theta(\bD^d)$ of $\bD^d$ under certain polynomial maps $\theta: \bD^d \to \theta(\bD^d)$. The main contributions of this paper are as follows:
		
		\begin{enumerate}
			\item We show that the closed algebra generated by inner functions on $\theta(\bD^d)$ forms a proper subalgebra of $H^\infty(\theta(\bD^d))$, the algebra of bounded holomorphic functions on $\theta(\bD^d)$.

			\item The set of all rational inner functions on $\theta(\bD^d)$ is shown to be dense in the norm-unit ball of $H^\infty(\theta(\bD^d))$ with respect to the uniform compact-open topology, thereby proving the Carath\'eodory approximation result.
			
			\item As an application of the Carath\'eodory approximation theorem, we approximate holomorphic functions on $\theta(\bD^d)$ that are continuous in  the closure of ${\theta(\bD^d)}$ by convex combinations of rational inner functions in the $L^2 $-norm, thereby obtaining a version of the Fisher's theorem.
			
			\item Given the two approximation results above, establishing a structure for rational inner functions is essential. We have identified the structure of rational inner functions on $\theta(\mathbb{D}^d)$.
			
			\item The Carath\'eodory approximation for operator-valued functions is also discussed.
		\end{enumerate}

	\end{abstract}

	\section{Introduction}
	
	Much like the function theory on the unit polydisc and the Euclidean unit ball captivated complex analysts \cite{BPS, BPS2, Knese, Knese2, Rudin, Rudin_EB}, the function theory on a class of domains arising from group actions on various bounded symmetric domains has become a prominent area of research \cite{SYM_GEO, Tirtha-Hari-JFA, BK2, SR, Cos, GSR,  KZ-Geo, MRZ}. The present article studies the bounded holomorphic functions on quotient domains related to the unit polydisc, $\bD^d$.
	
	Fix a positive integer $d>1$. A {\em pseudo-reflection} is a linear map $\sigma: \bC^d \rightarrow \bC^d$ such that $\operatorname{rank}(I_d -\sigma)=1$ and $\sigma^n = I_d$ for some $n \in\mathbb{N}$. A group $G$ generated by pseudo-reflections is called a {\em pseudo-reflection group}. A domain $\Omega$ is said to be $G$-invariant if, under the action $\sigma \cdot \boldsymbol{z}= \sigma^{-1}(\boldsymbol{z})$ of $G$, the domain $\Omega$ remains invariant. This action induces a natural action $$(\sigma \cdot f)(\boldsymbol{z})= f(\sigma^{-1}\cdot \boldsymbol{z})=f(\sigma(\boldsymbol{z}))$$ of $G$ on complex-valued functions on $\Omega$. A function $f$ is said to be $G$-invariant if $\sigma \cdot f = f$ for each $\sigma \in G$. Examples of pseudo-reflection groups include any finite cyclic group, the dihedral groups and the permutation groups. We denote the permutation group by $S_d$, acting on $\bC^d$ by permuting the co-ordinates, that keeps $\bD^d$ invariant. The symmetric polynomials are some examples of $S_d$-invariant functions in this case.
	
	The renowned works of Chevalley \cite{Chevalley}, Shephard and Todd \cite{Shephard-Todd} establish that for a finite linear group $G$ and a $G$-invariant domain $\Omega$, the quotient topological space $\Omega/G$ is the image $\theta(\Omega)$ under a polynomial map (called as {\em basic polynomial map associated to $G$}) $$\theta(\boldsymbol{z})=(\theta_1(\boldsymbol{z}), \dots, \theta_d(\boldsymbol{z}))$$ for $\boldsymbol{z}\in \Omega$ if and only if $G$ is a pseudo-reflection group. Although the map $\theta$ is not unique, the degrees of $\theta_j$'s are unique for $G$ up to order. Also, the $\theta_j$'s are homogeneous algebraically independent polynomials. 
	
	In this article, we shall consider $\Omega = \bD^d$ and the pseudo-reflection group $G$ to be one of the imprimitive pseudo-reflection groups $G(m,t,d)$, which appeared in the classification of finite pseudo-reflection groups, defined as follows \cite{Rudin-IUMJ}. Let $m$ and $t$ be positive integers such that $t$ divides $m$. Let $\alpha = e^{\frac{2 \pi i}{m}}$ and $G(m,t,d)$ consist of the maps 
	$$
	\boldsymbol{z}=(z_1,\dots, z_d) \mapsto \left( \alpha^{\nu_1} z_{\sigma(1)}, \dots, \alpha^{\nu_d} z_{\sigma(d)}\right)
	$$
	where $\sigma \in S_d$ and $\nu_1,\dots, \nu_d$ are integers whose sum is divisible by $t$. The group $G(m,t,d) $ is a pseudo-reflection group of order $\frac{m^d d!}{t}$. Note that the choice $(m, t)=(1,1)$ gives $G(1,1,d) = S_d$. The polynomials
	\begin{align}
		&\theta_j(\boldsymbol{z}) = E_j(z_1^m, \dots, z_d^m) \text{ for } j=1,\dots, d-1, \label{Eq:theta-j}\\
		&\theta_d(\boldsymbol{z}) = [E_d(z_1^m, \dots, z_d^m)]^{\frac{1}{t}} = (z_1 \dots z_d)^{\frac{m}{t}} \label{Eq:theta-d},
	\end{align}
	where $E_1, \dots, E_d$ are the elementary symmetric polynomials, form a basic polynomial map $\theta$ associated to $G$. Throughout this article, we shall work with this fixed polynomial map. {\em For fixed $m,d$ and $t$, we shall write $G$ to denote the group $G(m,t,d)$}. From now on, we shall work on $\theta(\bD^d)$, which is biholomorphic to the quotient domain $\bD^d /G(m,t,d)$.  In particular, the quotient domain $\bD^d/S_d$ is known as the {\em symmetrized polydisc} and it is a non-convex but polynomially convex domain, see \cite{Edigarian-Zwonek}. It is naturally associated with the famous spectral interpolation problem, see \cite{Cos2}.  The quotient domains $\Omega /G(m,t,d)$ have been studied in \cite{BBS, BK, BDGR-adv, Chen-JFA, GN-BDSM, GSR, Wick-CJM,Pal-Shalit, Rudin-IUMJ} when $\Omega$ is either the open Euclidean unit ball or the open unit polydisc. 
	
	The basic polynomial map $\theta$ is proper on $\bD^d$ and it extends as a proper map of the same multiplicity from a neighbourhood of $\overline{\bD^d}$ to a neighbourhood of $\theta(\bD^d)$. Also, the \v Silov boundary $\bT^d$ of $\bD^d$ with respect to the polydisc algebra $\cA(\bD^d)$ is the same as  $\theta^{-1}(\partial \theta(\bD^d))$ where $\partial \theta(\bD^d)$ is the \v Silov boundary of $\theta(\bD^d)$ with respect to the uniform algebra $\cA(\theta(\bD^d))$ of continuous functions on $\overline{\theta(\bD^d)}$ which are holomorphic in $\theta(\bD^d)$; see \cite{Kos-Zow}. Therefore, $\partial \theta(\bD^d)= \theta(\bT^d)$. We shall use $\boldsymbol{p}=(p_1, \dots, p_d)$ as co-ordinates in $\theta(\bD^d)$.
	
	Let $H^\infty(\theta(\bD^d))$ be the Banach algebra of all bounded holomorphic functions on $\theta(\bD^d)$ with sup-norm. If $g \in H^\infty(\mathbb{D}^d) $, the Banach algebra bounded analytic functions on $\bD^d$, then the radial limit
	$$
	g^*(\zeta_1,\cdots, \zeta_d):= \lim_{r\to 1^{-}} g(r\zeta_1,\cdots, r\zeta_d)
	$$
	exists almost everywhere in $\mathbb{T}^d$ with respect the normalized Haar measure $\nu$ on $\mathbb{T}^d$; see \cite[Chapter 3]{Rudin}. For every $f\in H^\infty(\theta(\bD^d))$, $f\circ \theta$ is in $H^\infty(\bD^d)$ having a radial limit. Therefore, the boundary values of $f$ exist and the boundary value function $f^*$ is given by 
	$$
	f^*(q_1, \dots, q_d)= \lim_{r\to 1^{-}} f(r^m q_1, r^{2m}q_2, \dots, r^{(d-1)m}q_{d-1}, r^{dm/t}q_d) 
	$$ 
	for almost every (with respect to $\mu$) $\boldsymbol{q}=(q_1, \dots, q_d) \in \partial \theta(\bD^d)$ where $\mu$ is the push-forward of the measure $\nu$ on $\mathbb{T}^d$ under the map $\theta$. Here, we are using the explicit expressions for $\theta_j$'s in terms of the elementary symmetric polynomials. Also, it is easy to check that $f \mapsto f^*$ is an isometric embedding of $H^\infty(\theta(\bD^d))$ into $L^\infty(\partial \theta(\bD^d), \mu)$; for a detailed discussion see \cite[Lemma 2.2]{BBS}.
	
	A function $f$ in $H^\infty(\theta(\bD^d))$ is said to be {\em inner} if $f^*$ is unimodular almost everywhere with respect to $\mu$ on $\partial \theta(\bD^d)$. A {\em rational inner function} $f$ on  $\theta(\bD^d)$ is a rational function  with poles off  $\theta(\bD^d)$ which is also inner. An important family of examples of rational inner functions on the symmetrized bidisc, $\bG_2$ which is biholomorphic to $\bD^2/G(1,1,2)$, is $$\varphi_{\beta}(p_1,p_2)= \frac{2\beta p_2-p_1}{2-\beta p_1}$$ for each $\beta$ in the unit circle $\mathbb{T}$. See \cite{SYM_GEO, SYM_Real, Tirtha-Hari-JFA} for its importance. Rational inner functions appear as solutions to the Pick-Nevanlinna interpolation problem, see \cite{DKS, DKS2, KZ2}.

	We first ask whether the algebra generated by inner functions is large enough to give us any advantage. In this direction, we have the following result.
	\begingroup
	\setcounter{tmp}{\value{thm}}
	\setcounter{thm}{0}
	\renewcommand\thethm{\Alph{thm}}
	\begin{thm}\label{Theorem B}
		The closed algebra generated by the inner functions in $H^\infty( \theta(\bD^d))$ is a proper subalgebra of $H^\infty(\theta(\bD^d))$.
	\end{thm}
	\endgroup
	
	The proof requires us to find a new characterization of inner functions on $\theta(\bD^d)$. We also show that unlike the classical case of $\mathbb{D}$, the \v Silov boundary of $H^\infty(\theta(\bD^d))$ is a proper subset of the set of all restrictions of complex homomorphisms on $L^\infty(\partial \theta(\bD^d), \mu)$. These are the contents of Section \ref{Alg-results}.
	
	It may be a surprise that in spite of Theorem \ref{Theorem B}, we have approximation results. Rational inner functions on the unit disc have been greatly studied for their usefulness. A classical theorem of Carath\'eodory says that any holomorphic self map of $\mathbb{D}$ can be approximated uniformly on compact subsets of $\mathbb{D}$ by rational inner functions. For modern treatment of this theorem, see \cite{B_J_K, BBK}. We prove such an approximation result in quotient domains.  
	
	\begingroup
	\setcounter{tmp}{\value{thm}}
	\setcounter{thm}{1}
	\renewcommand\thethm{\Alph{thm}}
	\begin{thm} \label{Theorem C}
		Any holomorphic function $f:\theta(\bD^d)\rightarrow{\overline{\mathbb{D}}}$ can be approximated (uniformly on compact subsets) by rational inner functions in $\cA(\theta(\bD^d))$.
	\end{thm}
	\endgroup
	
	This in turn enables us to prove a theorem on approximation by convex combinations of rational inner functions in the $L^2$ sense. These are the contents of Section \ref{Apr-Inner}.
	
	\begingroup
	\setcounter{tmp}{\value{thm}}
	\setcounter{thm}{2}
	\renewcommand\thethm{\Alph{thm}}
	\begin{thm} \label{Theorem D}
		Any function in the norm unit ball of $H^\infty \left(\theta(\bD^d) \right)$ can be approximated by convex combinations  of rational inner functions in $\cA( \theta(\bD^d))$ with respect to the $L^2$-norm on $ \partial \theta(\bD^d) $ equipped with the measure $\mu$.
	\end{thm}
	\endgroup

	In Theorem \ref{Theorem C} and Theorem \ref{Theorem D}, we approximate using rational inner functions, which prompts an exploration of their underlying structure. We establish the structure of rational inner functions on $\theta(\mathbb{D}^d)$. This is our Theorem \ref{Theorem A}.
	
	\begingroup
	\setcounter{tmp}{\value{thm}}
	\setcounter{thm}{3}
	\renewcommand\thethm{\Alph{thm}}
	\begin{thm}\label{Theorem A}
		Given a rational inner function $f$ on $\theta(\bD^d)$, there exist a non-negative integer $k$, $\tau\in\mathbb{T}$ and a polynomial $g$ with no zero in $\theta(\bD^d)$ such that
		\begin{align}\label{RIF-G}
			f(p_1, \dots, p_{d-1}, p_d)=\tau p_d^{k} \ \frac {\overline{g\left(\frac{\overline{p_{d-1}}}{\bar{p}_d^t}, \frac{\overline{p_{d-2}}}{\bar{p}_d^t}, \dots, \frac{\overline{p_1}}{\bar{p}_d^t}, \frac{1}{\bar{p}_d} \right)}}{g\left(p_1,\dots,p_{d-1}, p_d\right)}
		\end{align}
		Conversely, any rational function of the form \eqref{RIF-G} is inner. Moreover, any inner function $f\in \cA(\theta(\bD^d))$ is a rational function of the form \eqref{RIF-G} with the additional property that $g$ has no zeros in $\overline{\theta(\bD^d)}$.
	\end{thm}
	\endgroup
	This result was obtained in the case of the polydisc by Pfister \cite{Pfister} as well as Rudin and Stout \cite{Rudin-Stout}. In the case of general bounded symmetric domains, it was proven by Kor\'anyi-V\'agi \cite{Koranyi-Vagi}. We cannot apply the Korányi-Vági result to $\theta(\bD^d)$ as it is not a bounded symmetric domain. The crux of our proof lies in being able to choose the polynomial in Rudin-Stout's theorem in a certain way. This is discussed in Section \ref{RIF}.
	
	Building on the success of previous results, a natural next step is to explore their generalization to the operator-valued setting. In this context, the notion of inner functions is extended by replacing the unimodular condition with values in the space of isometries. Establishing these results in the operator-valued framework required leveraging concepts from function-theoretic operator theory on these domains. We first construct a collection of \textit{test functions} (see Section \ref{Operator-Valued} for the definition) for $\theta(\mathbb{D}^d)$ 
	when $d=2$.  Using the theory developed by Dritschel and McCullough \cite{DM}, we can obtain the {\em Pick-Nevanlinna interpolation} for a class of functions in $H^\infty(\theta(\mathbb{D}^2))$. This then enables us to provide a version of the Carathéodory approximation theorem for operator-valued functions on $\theta(\bD^2)$.
	
	\vspace{2mm}
	
	\noindent \textbf{Acknowledgement:} The authors are grateful to their research supervisor, Prof. Tirthankar Bhattacharyya, and to Dr. Haripada Sau for some discussions. The second author also thanks Prof. D. E. Marshall for reading the introduction and providing helpful comments. Finally, we sincerely appreciate the referee's comments and suggestions, particularly for drawing our attention to the paper \cite{Rudin-Stout}.

	\section {Algebra generated by inner functions} \label{Alg-results}
	A half-century ago, Marshall proved that the closed algebra generated by inner functions on the disc is $H^\infty(\mathbb{D})$ \cite{Mar}. From this it follows that the inner functions seperate the points in the maximal ideal space of $H^\infty(\mathbb{D})$. Marshall's result has also been generalized for certain uniform algebras \cite{Ber-Gar-Mar}. After that, Kon showed that these results do not hold for $H^{\infty}(\mathbb{D}^d)$ when $d>1$; see \cite{Kon, Kon2}. Naturally, we can ask these questions in our setting of quotient domains. In this section, we answer them in a negative direction for $\theta(\mathbb{D}^d)$.
	The main goal of this section is to prove Theorem \ref{Theorem B}.

	To establish this, we shall rely on a set of lemmas and propositions. Some of these lemmas can be deduced using the results from the polydisc, while others require their own distinct treatment. Furthermore, certain propositions hold independent interest and importance, even beyond their relevance to the theorem.
	
	Let $M_1$ and $M_2$ be the maximal ideal spaces of $H^\infty(\theta(\bD^d))$ and $H^\infty(\mathbb{D}^d)$ respectively. The maximal ideal spaces $M_1$ and $M_2$ are endowed with the weak$^*$ topologies inherited from the norm-unit balls of the continuous duals of the Banach spaces $H^\infty(\theta(\bD^d))$ and $H^\infty(\mathbb{D}^d)$ respectively. Also, we denote the maximal ideal spaces of $L^\infty (\partial\theta(\bD^d), \mu)$ and $L^\infty(\mathbb{T}^d, \nu)$ by $X_1$ and $X_2$ respectively. Define two maps $\tau_j : X_j \to M_j $ for $j=1,2$ in the following way:
	$$
	\tau_1(m)= m|_{H^\infty(\theta(\bD^d))} \ \text{for all}\ m\in X_1,
	$$
	$$
	\tau_2(m)= m|_{H^\infty(\mathbb{D}^d)}\ \text{for all}\ m\in X_2.
	$$
	For $f\in H^\infty(\theta(\bD^d))$, the Gelfand transform $\hat{f}$ of $f$ is a continuous function from $M_1$ to $\mathbb{C}$ given by
	$$
	\hat{f}(m)= m(f) \ \text{ for each } m\in M_1. 
	$$
	Similarly, the Gelfand transforms of functions in $L^\infty (\partial\theta(\bD^d), \mu), L^\infty (\bT^d, \nu)$ and $H^\infty(\bD^d)$ are defined on their respective maximal ideal space. See \cite{Gamelin} for more details.
	
	A closed boundary for $H^\infty(\theta(\bD^d))$ is a closed subset $\cC$ of the maximal ideal space of $H^\infty(\theta(\bD^d))$ such that $$\|f\|_\infty= \operatorname{sup}\{|\varphi(f)|: \varphi\in \cC \}$$ for all $f$ in $H^\infty(\theta(\bD^d))$. The {\em \v Silov boundary} of the uniform algebra $H^\infty(\theta(\bD^d))$ is the smallest closed boundary of $H^\infty(\theta(\bD^d))$. Let $\partial$ denote the \v Silov boundary for $H^\infty(\theta(\bD^d))$.
	For any $m\in X_2$, we can define $\tilde{m}: H^\infty(\theta(\bD^d)) \to \mathbb{C}$ such that $$\tilde{m}(\psi)= m(\psi\circ \theta).$$ Then $\tilde{m} \in X_1$. In a similar way, every member of $M_2$ gives rise to a unital complex homomorphism in $M_1$. The following proposition characterizes the inner functions on the polydisc in terms of the homomorphisms in $\tau_2(X_2)$. It is due to Kon.
	\begin{proposition}{\cite[Corollary 4]{Kon}}\label{bidisc-inner}
		A function $u$ in $H^\infty (\mathbb{D}^d)$ is inner if and only if $|\hat{u}(\Phi)|=1$ for all $\Phi \in \tau_2(X_2)$.
	\end{proposition}
	\begin{lemma}\label{Prop-Inner}
		A function $f$ in $H^\infty (\theta(\bD^d))$ is inner if and only if $|\hat{f}(\Phi)|=1$ for all $\Phi \in \tau_1(X_1)$.
	\end{lemma}
	
	\begin{proof}
		If $f$ is inner in $H^\infty(\theta(\bD^d))$, then it straightforward that $|\hat{f}(\Phi)|=1$ for all $\Phi\in \tau_1(X_1)$.
		For the converse part, assume that $|\hat{f}(\Phi)|=1$ for all $\Phi$ in $\tau_1(X_1)$. It is enough to show that $f\circ \theta$ is inner function in $\mathbb{D}^d$. Let $m \in X_2$. Then, $$|\tau_2(m)(f\circ\theta)|= |m|_{H^\infty(\mathbb{D}^d)}(f\circ \theta)| = |\tau_1(\tilde{m})(f)|=|\hat{f}(\tau_1(\tilde{m}))|=1.$$ Since $m\in X_2$ is arbitrary, Proposition \ref{bidisc-inner} implies that $f\circ \theta$ is an inner function. We note from the proof that the whole of $\tau_1(X_1)$ may not be needed for the converse part.
	\end{proof}
	
	For a non-negative Borel measure $\eta$ on $\mathbb{T}^d$ and a function $f$ in $L^1(\mathbb{T}^d, \nu)$, we shall denote the real measure $f d\nu - d\eta$ by $\eta_f$. Let $\mathcal{P}$ be the Poisson kernel of $\bD^d$. Refer to \cite{Rudin} for more details on the Poisson kernel.
	
	\begin{lemma}\label{sing}
		Let $f$ be a positive, $G$-invariant and lower semi-continuous function on $\mathbb{T}^d$ such that $f\in L^1(\mathbb{T}^d, \nu)$. Then there exists a positive measure $\eta$ on $\mathbb{T}^d$ which is singular with respect to $\nu$ such that
		$$P[\eta_f]= Re(g)$$ for some $G$-invariant holomorphic function $g$ on $\mathbb{D}^d$, where $P[\eta_f]$ is the Poisson integral of the real Borel measure $\eta_f$ on $\mathbb{T}^d$.
	\end{lemma}
	\begin{proof}
		We use Theorem 2.4.2 of \cite{Rudin} to obtain a non-negative measure $\omega$ on $\bT^d$ which is singular with respect to $\nu$ and $P[\omega_f]= Re(u)$ for some holomorphic function $u$ in $\bD^d$. For $\sigma \in G$, we consider the pull-back $\sigma^* \omega$ of $\omega$ under the homeomorphism $\sigma$ on $\bT^d$ and define a new measure $\eta$ on $\bT^d$ as follows:
		\begin{align*}
			\eta(E) := \frac{1}{|G|} \sum_{\sigma \in G} \sigma^* \omega(E) = \frac{1}{|G|} \sum_{\sigma \in G} \omega(\sigma(E))
		\end{align*}
		for each $\omega$-measurable subset $E$ of $\bT^d$. It is easy to observe that the support of the measure $\eta$ is the union of the supports of the measures $\sigma^* \omega$ for all $\sigma$ in $G$. Using a change of variables for the maps $\sigma : \bT^d \rightarrow \bT^d$ we observe that supports of the measures $\sigma^* \omega$ have $\nu$ measure zero. Hence $\eta$ is singular with respect to $\nu$. 
		
		Again $P[\omega_f]= Re(u)$ implies that for every $\sigma \in G$,
		\begin{align}\label{Eq:RP measure}
			Re \ u(\sigma(\boldsymbol{z})) &= P[f](\sigma(\boldsymbol{z})) - P[\omega](\sigma(\boldsymbol{z}))  \notag \\
			&= \int_{\bT^d} \mathcal{P}(\sigma(\boldsymbol{z}), \boldsymbol{\zeta}) f(\boldsymbol{\zeta}) d\nu(\boldsymbol{\zeta}) - \int_{\bT^d} \mathcal{P}(\sigma(\boldsymbol{z}), \boldsymbol{\zeta}) d\eta(\boldsymbol{\zeta}).                            
		\end{align}
		Suppose, $$\sigma(\boldsymbol{z})= \left( e^{\frac{2\pi i}{m}\nu_1} z_{\gamma(1)}, \dots, e^{\frac{2\pi i}{m}\nu_d} z_{\gamma(d)}\right)$$ for some $\gamma \in S_d$ and the integers $\nu_j$ are as in the definition of the group $G=G(m,t,d)$. Then by the change of variable $$\zeta_j = e^{\frac{2\pi i}{m}\nu_j} \zeta_{\gamma(j)}$$ for each $j=1,\dots, d$ and the invariance of the measure $d\nu$ under this transformation we get
		\begin{align*}
			\int_{\bT^d} \mathcal{P}(\sigma(\boldsymbol{z}), \boldsymbol{\zeta}) f(\boldsymbol{\zeta}) d\nu(\boldsymbol{\zeta}) &= \int_{\bT^d} \prod_{j=1}^d \frac{1-|z_{\gamma(j)}|^2}{|1- e^{\frac{2\pi i}{m}\nu_j} z_{\gamma(j)} \overline{\zeta_j}|^2 } f(\boldsymbol{\zeta}) d\nu(\boldsymbol{\zeta}) \\
			&= \int_{\bT^d} \prod_{j=1}^d \frac{1-|z_{\gamma(j)}|^2}{| 1- z_{\gamma(j)} \overline{\zeta_{\gamma(j)}} |^2} f(\sigma(\boldsymbol{\zeta})) d\nu(\boldsymbol{\zeta}) \\
			&= P[f](\boldsymbol{z}).
		\end{align*} 
		The penultimate equality holds as $f$ is $G$-invariant. 
		Finally, note that by our construction the measure $\eta$ is also $G$-invariant and hence the same change of variables trick as above shows that
		\begin{align*}
			\int_{\bT^d} \mathcal{P}(\sigma(\boldsymbol{z}), \boldsymbol{\zeta}) d\eta(\boldsymbol{\zeta})= P[\eta](\boldsymbol{z}).
		\end{align*} 
		Thus from \eqref{Eq:RP measure} we conclude that $$Re \ u(\sigma(\boldsymbol{z})) = Re \ u(\boldsymbol{z}) = P[\eta_f](\boldsymbol{z}).$$ This completes the proof.
	\end{proof}
	
	We shall denote the subalgebra of all $G$-invariant holomorphic functions in $H^\infty(\bD^d)$ by $H^\infty(\bD^d)^G$.
	\begin{lemma}\label{Lem3}
		
		Given a positive, bounded and $G$-invariant lower semi-continuous function $\psi$ on $\mathbb{T}^d$, there exists $f$ in $H^\infty(\bD^d)^G$ such that
		$$\psi=|f^*| \quad \nu-\text{a.e. on } \mathbb{T}^d, $$
		where $f^*$ is the radial limit of $f$ which is defined $\nu-$a.e. in $\mathbb{T}^d$.
	\end{lemma}
	\begin{proof}
		Since $\psi$ is a lower-semicontinuous function, it attains its minimum on $\mathbb{T}^d$. After adding a positive constant we can make it bigger than $1$. Hence, we shall assume that $\psi>1$. Thus $\operatorname{log}\psi$ is well defined and satisfies all the conditions of Lemma \ref{sing}. Thus we get a positive measure $\eta$ on $\mathbb{T}^d$ which is singular with respect to $\nu$, and a $G$-invariant holomorphic function $g$ on $\mathbb{D}^d$ such that $$P[\eta_{\operatorname{log}(\psi)}]= Re(g).$$
		Set $f=\operatorname{exp}(g)$. By the definition of $f$, it is $G$-invariant and holomorphic on $\mathbb{D}^d$. Now we shall show that $f$ is bounded. To that end, note that
		$$\operatorname{log}|f|= \operatorname{Re}(g)=P[\eta_{\operatorname{log}(\psi)}]= P[{\operatorname{log}(\psi)}]-P[d\eta].$$
		As $\eta$ is a non-negative measure, we get
		$$P[\eta_{\operatorname{log}(\psi)}]\leq P[\operatorname{log}(\psi)].$$
		This imples that $\operatorname{log}|f|\leq P[\operatorname{log}(\psi)]$. Since $P[\operatorname{log}(\psi)]$ is bounded, $\operatorname{log}|f|$ is bounded. Thus, $f$ is bounded and hence $f\in H^{\infty}(\mathbb{D}^d)^G$. Applying Theorem 2.3.1 of \cite{Rudin} to the function $f$ and the measure $\eta$, we get that $\psi=|f^*|$.
	\end{proof}
	Consider the maximal ideal spaces $M_{0}$ and $X_0$ of the algebras $H^\infty(\mathbb{D})$ and $L^\infty(\mathbb{T})$, respectively. It is well known that the map $\tau_0: X_0 \rightarrow M_0$, defined as $\tau_0(m)= m|_{H^\infty(\mathbb{D})}$ for all $m\in X_0$, is a homeomorphism from $X_0$ onto the \v Silov boundary of $H^\infty(\mathbb{D})$, see \cite{Hoff}. In the subsequent theorem, we shall establish that, unlike the classical case of $\bD$, the \v Silov boundary is a proper subset of the set comprising of all restrictions of complex homomorphisms on $L^\infty(\partial \theta(\bD^d), \mu)$. The proof is motivated by \cite{Range}.

	\begin{thm}\label{prop1}
		The map $\tau_1: X_1\to M_1$ is continuous and $\partial $ is a proper subset of $\tau_1(X_1)$.
	\end{thm}
	\begin{proof}
		Take a countable dense subset $\{\boldsymbol{p}^{(n)}=(p^{(n)}_1, \dots, p^{(n)}_d): n\in \mathbb{N}\}$ of $\partial \theta(\bD^d)$. For each $n$, define 
		$$A_n= \{(e^{i m s}p^{(n)}_1, \dots, e^{i (d-1) m s}p^{(n)}_{d-1}, e^{i d m s/t}p^{(n)}_d ): s \in [0,2\pi]\}.$$ Each $A_n$ is a compact set. We consider tubular type open neighbourhoods ${A_{j,n}}$ of the set $A_n$ such that $\mu (A_{j,n})< \frac{1}{j.2^{n+1}}$ for every $j\in \mathbb{N}$. Moreover, we choose these neighbourhoods in such a way that, if $(p_1, \dots, p_d)\in A_{j,n}$, then $$(e^{i m s}p_1, \dots, e^{i (d-1) m s}p_{d-1}, e^{i d m s/t}p_d )\in A_{j,n}$$ for all $s \in [0,2\pi]$. Take, $E_j =\cup_{n\in \mathbb{N}} A_{j, n} $. Clearly, $\mu(E_j) \leq \frac{1}{2j}$ and $E_j$ is an open dense subset of $\partial \theta(\bD^d)$.
		
		Consider the set $Z_k= \{\varphi \in X_1: \varphi(\chi_{E_k})=1 \}$ where $\chi_{E_k}$ is the indicator function of the set $E_k$. Then the Gelfand transform of $\chi_{E_k}$ satisfies the condition $\hat{\chi}_{E_k} = \chi_{Z_k}$.
		
		For $f\in L^\infty(\partial \theta(\bD^d), \mu)$, the Gelfand transform $\hat{f}$ is in $C(X_1)$. Define a continuous linear functional on $C(X_1)$ given by, $\hat{f} \mapsto \int_{\partial \theta(\bD^d)}f d\mu$. Then by the Riesz representation theorem, there exists a unique regular Borel measure $\hat{\mu}$ such that $$\int_{\partial \theta(\bD^d)} f d\mu = \int_{X_1} \hat{f} d\hat{\mu}.$$
		Therefore
		\begin{align}\label{Eq:mu-hat}
			\hat{\mu}(Z_k) &= \int_{X_1} \chi_{Z_k} d\hat{\mu}
			= \int_{X_1} \hat{\chi}_{E_k}d\hat{\mu}
			= \int_{\partial \theta(\bD^d)} \chi_{E_k} d\mu
			= \mu(E_k) \leq \frac{1}{2k}.
		\end{align}
		
		Let $f \in H^\infty ( \theta(\bD^d))$. Define, $L_f : \partial \theta(\bD^d) \to \mathbb{R}$ by 
		\begin{align*}
			L_f(\boldsymbol{p}) &= \operatorname{ess \ sup}_{s \in [0,2\pi]} |f^*(e^{i m s}p_1, \dots, e^{i (d-1) m s}p_{d-1}, e^{i d m s/t}p_d )| \\
			&= \sup_{0<r<1} \left\lbrace \sup_s |f(r^m e^{i m s}p_1, \dots, r^{m(d-1)} e^{i (d-1) m s}p_{d-1}, r^{m/t}e^{i d m s/t}p_d )| \right\rbrace.
		\end{align*}
		
		Then $L_f$ can be shown to be lower semi-continuous on $\partial \theta(\bD^d)$ using a similar argument used in Theorem 3.5.2 in \cite{Rudin}. Now suppose that
		$$\sup_{\tau_1(Z_k)} |\hat{f}|= \operatorname{ess \ sup}_{E_k} |f^*| \leq 1.$$ For $\mu$-a.e, $\boldsymbol{p}\in E_k$ , $L_f(\boldsymbol{p})  \leq 1. $ Since $L_f$ is lower semi continuous and $E_k$ is open dense in $\partial \theta(\bD^d)$, for all $\boldsymbol{p}\in \partial \theta(\bD^d)$, $L_f(\boldsymbol{p}) \leq 1 $ and hence $\|f\|_\infty \leq 1.$ Therefore, $\tau_1(Z_k)$ is a closed boundary for $H^\infty(\theta(\bD^d))$.
		We can construct $E_j$'s such that $E_j \supset E_{j+1}$ and so, $Z_j \supset Z_{j+1}$. $Z_j$'s being non empty compact subsets, by the finite intersection property we have, $$ E= \cap_{j=1}^\infty Z_j \neq \emptyset.$$ Also, $$\tau_1(E)= \cap_{j=1}^\infty \tau_1(Z_j).$$ We know that $\tau_1(Z_j)$'s are closed boundaries for $H^\infty (\theta(\bD^d))$ and hence $\tau_1(E)$ is so. Note that, \eqref{Eq:mu-hat} implies $$\hat{\mu}(E)=\lim_{j\to \infty} \hat{\mu}(Z_j)=0.$$ Also, the closed support of $\hat{\mu}$ is $X_1$ (This can be easily seen from Lemma 9.1 in \cite{Gamelin}). So, we have shown that there exists a nowhere dense subset $E$ of $X_1$ with $\hat{\mu}(E)=0$ such that $\tau_1(E)$ is a closed boundary for $H^\infty(\theta(\bD^d))$.
		
		Consider a subset $A\subseteq \partial \theta(\bD^d) \setminus E_1$ with $\mu(A)>0.$ Then $\theta^{-1}(A)$ is $G$-invariant subset of positive $\nu$ measure in $\mathbb{T}^d$. Now we take the lower semi continuous function $\psi$ given by
		\begin{equation*}
			\psi=
			\begin{cases}
				1, & \text{on}\ \theta^{-1}(A)  \\
				2, & \text{on}\ \theta^{-1}(E_1)\\
				0, & \text{else}.
			\end{cases}
		\end{equation*}
		By Lemma \ref{Lem3}, there exists a $G$-invariant bounded holomorphic function $g$ such that \begin{equation*}
			|g^*|=
			\begin{cases}
				1, & \text{on}\ \theta^{-1}(A) \\
				2, & \text{on}\ \theta^{-1}(E_1).
			\end{cases}
		\end{equation*}
		This means that there exists $f\in H^\infty(\theta(\bD^d))$ such that
		\begin{equation*}
			|f^*|=
			\begin{cases}
				1, & \text{on}\ A \\
				2, & \text{on}\ E_1.
			\end{cases}
		\end{equation*}
		
		Therefore, $|\hat{f}|=2$ on the \v Silov boundary of $H^\infty(\theta(\bD^d))$. But $|\hat{f}|$ is not identically $2$ on $\tau_1(X_1)$. Indeed, consider
		$$
		Z_A=\{\varphi \in X_1: \hat{\chi}_A(\varphi)=1\} \,\text{and}\
		Z_{E_1}=\{\varphi \in X_1: \hat{\chi}_{E_1}(\varphi)=1 \}.
		$$
		We can find two unimodular measurable functions $g_1$ and $h$ on $\partial \theta(\bD^d)$ such that $$f^*= 2g_1 \chi_{E_1} + h \chi_A.$$ For $\varphi \in Z_{E_1}$, we have $\varphi|_{H^\infty(\theta(\bD^d))} \in \tau_1(Z_{E_1})$ and
		\begin{align*}
			|\hat{f}(\varphi|_{H^\infty(\theta(\bD^d))})| &= |\varphi(f^*)|
			= 2|\varphi(g_1) \varphi(\chi_{E_1}) + \varphi(h) \varphi(\chi_A)|
			=2
		\end{align*} as $|\varphi(h)|= |\varphi(g_1)| =1$ and $\varphi(\chi_A)=0$.
		So, $|\hat{f}|=2$ on $\tau_1(Z_{E_1})$. We know that $\tau_1(Z_{E_1})$ is a closed boundary for $H^\infty(\theta(\bD^d))$. Hence $|\hat{f}|=2$ on the \v Silov boundary of $H^\infty(\theta(\bD^d))$. Similarly, $|\hat{f}|=1$ on $\tau_1(Z_A)$. Since $\mu(A)>0$, $\tau_1(Z_A) \neq \emptyset$ and so $\tau_1(Z_A) \cap \partial  = \emptyset$. Therefore, $\partial $ is a proper subset of $\tau_1(X_1)$.
	\end{proof}
	It is worth noting that the proof of the theorem above not only confirms this result but also provides additional insights. Specifically, it demonstrates the existence of a nowhere dense subset in the maximal ideal space of $L^\infty(\partial \theta(\bD^d), \mu)$, which serves as a closed boundary for $H^\infty(\theta(\bD^d))$. The theorem mentioned above, along with the following proposition, will play a crucial role in proving the main theorem.
	\begin{proposition}\label{prop2}
		The Gelfand transforms of the inner functions in $H^\infty(\theta(\bD^d))$ cannot separate points of the maximal ideal space $M_1$.
	\end{proposition}
	\begin{proof}
		The proof of this proposition is standard once we have Theorem \ref{prop1}. Indeed, If possible, suppose that the Gelfand transforms of the inner functions in $H^\infty(\theta(\bD^d))$ separate points of $\tau_1(X_1)$. Take a proper compact subset $K\subset \tau_1(X_1)$ and let $\Phi'\in \tau_1(X_1)\setminus K$. Then for each $\Phi_{\alpha}	\in K$, by our assumption, there exists an inner function $I_{\alpha}$ such that
		$$\hat{I}_{\alpha}(\Phi')\neq \hat{I}_{\alpha}(\Phi_{\alpha}).$$
		By Lemma \ref{Prop-Inner}, both $\hat{I_{\alpha}}(\Phi')$ and $\hat{I}_{\alpha}(\Phi_{\alpha})$ are unimodular. So, without loss of generality we can assume that $$\hat{I}_{\alpha}(\Phi')=1 \quad \text{ and } \quad  Re\left(\hat{I}_{\alpha}(\Phi_{\alpha})\right)<1.$$
		By continuity, there exists a neighbourhood $\cN_\alpha$ of $\Phi_{\alpha}$ such that $Re\left(\hat{I}_{\alpha}(\Phi)\right)<1$ for all $\Phi$ in  $\cN_{\alpha}$. Since $K$ is a compact set, there exists $r$ such that $K\subset \cup_{j=1}^r \cN_{\alpha_j}$. Hence, for each $j=1,2,\dots, r$, we have  $\hat{I}_{\alpha}(\Phi')=1.$ Also,
		$$\operatorname{inf}_{1\leq j\leq r} Re\left(\hat{I}_{\alpha_j}(\Phi)\right)<1 \quad \text{ for all } \Phi\in K.$$
		Consider the holomorphic function $g=1+I_{\alpha_1}+\dots +I_{\alpha_r}$ in $H^\infty(\theta(\bD^d))$. Note that,
		$$\hat{g}(\Phi')= \hat{1}(\Phi')+\hat{I}_{\alpha_1}(\Phi')+\dots +\hat{I}_{\alpha_r}(\Phi')=r+1= \|\hat{g}\|.$$
		Also for $\Phi\in K$, we have
		$$|\hat{g}(\Phi)|=|\hat{1}(\Phi)+\hat{I}_{\alpha_1}(\Phi)+\dots +\hat{I}_{\alpha_r}(\Phi)|\leq r+1.$$
		Now, if $|\hat{g}(\Phi)|=r+1$, then $\hat{I}_{\alpha_1}(\Phi)+\dots +\hat{I}_{\alpha_r}(\Phi)=r.$ Thus, $\sum_{j=1}^r Re\left(\hat{I}_{\alpha_j}(\Phi)\right) =r$, which is a contradiction as $$\operatorname{inf}_{1\leq j\leq r}Re\left(\hat{I}_{\alpha_j}(\Phi)\right)<1.$$ Therefore, for each $\Phi\in K$, we have $|\hat{g}(\Phi)|<r+1$ and $|\hat{g}(\Psi)|=r+1$. This implies that $K$ cannot be a closed boundary of $H^\infty(\theta(\bD^d))$. Since $K$ is an arbitrary proper compact subset of $\tau_1(X_1)$, we must have that $\tau_1(X_1)$ is the \v Silov boundary for $H^\infty(\theta(\bD^d))$. This is not possible because of Theorem \ref{prop1}. Hence, the Gelfand transforms of inner functions cannot separate points of $\tau_1 (X_1)$ and hence the points of $M_1$.
	\end{proof}
	
	Now, we are ready to prove the main theorem of this section.
	
	\begin{proof}[\textbf{Proof of Theorem \ref{Theorem B}:}] Let $\mathcal{A}$ be the closed algebra generated by inner functions in $H^\infty(\theta(\bD^d))$. Suppose that $\mathcal{A}= H^\infty(\theta(\bD^d))$. Let $m_1$ and $m_2$ be two distinct elements in $M_1$. Then there exists $f \in H^\infty(\theta(\bD^d))$ such that $m_1(f)\neq m_2(f)$. If for every inner function $I$, $m_1(I)=m_2(I)$ then $m_1 = m_2$ on $\mathcal{A}$ as the inner functions generate it. This is a contradiction. Hence the Gelfand transforms of the inner functions separate points of $M_1$. This contradicts Proposition \ref{prop2}. Hence $\mathcal{A}$ is a proper subalgebra of $H^\infty(\theta(\bD^d))$.
	\end{proof}

	\section{Approximation by Inner Funtions } \label{Apr-Inner}
	This section aims to discuss some approximation results by rational inner functions. We first show that the set of rational inner functions is dense in the norm-unit ball of $H^\infty(\theta(\bD^d))$ with respect to the compact-open topology. In a recent work \cite[Section 7]{BBS}, the density of rational inner functions in $\theta(\bD^d)$ leads to an interesting result about projectivity of Hilbert modules over $\cA(\theta(\bD^d))$ in a certain category.
	
	We use the following notations: $\boldsymbol{z} = (z_1, z_2,\dots, z_d)$, $\boldsymbol{z}^{\boldsymbol{n}} = z_1^{n_1} z_2^{n_2}\dots  z_d^{n_d}$ for $\boldsymbol{n}=(n_1, \dots, n_d)$ and $\frac{1}{\overline{\boldsymbol{z}}}=(\frac{1}{\overline{z_1}}, \frac{1}{\overline{z_2}},\dots, \frac{1}{\overline{z_d}})$. The idea of the proof stems from Satz 4 in \cite{Pfister} as well as Theorem 5.5.1 in \cite{Rudin}.
	
	\begin{proof}[\textbf{Proof of Theorem \ref{Theorem C}:}]
		Choose $\epsilon >0$. Let us fix a compact subset $S$ of $\theta(\bD^d)$. Since the map $\theta$ is a proper map, $K=\theta^{-1}(S)$ is a compact subset of $\bD^d$. The function $f \circ \theta$ is $G$-invariant and holomorphic in $\bD^d$. Also, for $\boldsymbol{z} \in K$,  $$\theta(\sigma(\boldsymbol{z}))=\theta(\boldsymbol{z}) \in K$$ i.e., $\sigma(\boldsymbol{z}) \in K$ for every $\sigma \in G$. We choose a polynomial $P$  such that $|P|<1$ on $\overline{\theta(\bD^d)}$ and 
		$$
		|f \circ \theta(\boldsymbol{z}) - P(\boldsymbol{z})|< \epsilon \ \text{ for } \boldsymbol{z} \in K.
		$$ 
		Consider $$P_G(\boldsymbol{z})= \frac{1}{|G|} \sum_{\sigma \in G} P(\sigma(\boldsymbol{z})).$$ Clearly, $|P_G|<1$ and  
		\begin{align}\label{Eq:estimate of polynomial}
			\left|f\circ \theta(\boldsymbol{z}) - P_G(\boldsymbol{z})\right|= |\frac{1}{|G|} \sum_{\sigma \in G} f\circ \theta(\sigma(\boldsymbol{z}))- P(\sigma(\boldsymbol{z}))| < \epsilon 
		\end{align} 
		for $\boldsymbol{z}$ in $K$. Note that $P_G$ is a $G$-invariant polynomial and so, the degree of $P_G$ will be of the form $\boldsymbol{n}=(n,\dots, n)$ for some non-negative integer $n$. A complete argument for this fact has been furnished just before \eqref{Eq: polydegree} in the proof of Theorem \ref{THM_RIF-G}. Now we choose a monomial $$g(\boldsymbol{z})= e^{i\kappa} (z_1 z_2 \dots z_d)^N$$ where the positive integer $N \geq n$ is such that $m$ divides $N$ with 
		\begin{align}\label{Eq:estimations}
			|g(\boldsymbol{z})|<\epsilon \text{ and } |P_G(\boldsymbol{z}) g(\boldsymbol{z}) \overline{P_G\left(\frac{1}{\bar{\boldsymbol{z}}}\right)}|<1
			\text{ on } K.
		\end{align}
		Since $N \geq n$, $Q(\boldsymbol{z})= g(\boldsymbol{z}) \overline{P_G(\frac{1}{\bar{\boldsymbol{z}}})}$ is a monomial times $\tilde{P}_G(\boldsymbol{z})$. We show that $Q$ is a $G$-invariant polynomial. To see this, let $\sigma \in G$ be such that 
		$$\sigma(\boldsymbol{z})= \left( e^{\frac{2\pi i \nu_1 }{m}} z_{\gamma(1)}, \dots, e^{\frac{2\pi i \nu_d }{m}} z_{\gamma(d)}\right)$$
		for some non-negative integers $\nu_1,\dots, \nu_d$ whose sum is divisible by $t$ and $\gamma \in S_d$. Then 
		\begin{align*}
			g(\sigma(\boldsymbol{z}))= e^{i\kappa} e^{\frac{2\pi i}{m} (\nu_1+ \dots + \nu_d)N} (z_1\dots z_d)^N = g(\boldsymbol{z})
		\end{align*}
		as $m$ divides $N$. Now, suppose $P_G(\boldsymbol{z})= \sum_{\boldsymbol{\alpha}} a_{\boldsymbol{\alpha}} \boldsymbol{z}^{\boldsymbol{\alpha}}$. Then
		
		\begin{align*}
			\overline{P_G\left(\frac{1}{\bar{\boldsymbol{z}}}\right)} = \sum_{\boldsymbol{\alpha}} \overline{a_{\boldsymbol{\alpha}}} \frac{1}{z_1^{\alpha_1} \dots z_d^{\alpha_d}}
		\end{align*}
		
		where $\boldsymbol{\alpha}=(\alpha_1, \dots, \alpha_d)$ are multi-indices with non-negative integers $\alpha_j$ for all $j$ and $a_{\boldsymbol{\alpha}}$'s are zero except for finitely many $\boldsymbol{\alpha}$.
		Therefore,
		\begin{align*}
			Q(\sigma(\boldsymbol{z})) &= g(\boldsymbol{z})\sum_{\boldsymbol{\alpha}} \overline{a_{\boldsymbol{\alpha}}} e^{\frac{2\pi i}{m} (\nu_1 \alpha_1+ \dots + \nu_d \alpha_d)} \frac{1}{z_{\gamma(1)}^{\alpha_1} \dots z_{\gamma(d)}^{\alpha_d}} 
		\end{align*}
		Again, $$
		P_G(\boldsymbol{z}) = P_G (\sigma(\boldsymbol{z})) = P_G\left( e^{\frac{2\pi i \nu_1 }{m}} z_{\gamma(1)}, \dots, e^{\frac{2\pi i \nu_d }{m}} z_{\gamma(d)}   \right)
		$$ implies that 
		\begin{align*}
			\overline{P_G\left(\frac{1}{\overline{\boldsymbol{z}}}\right)}&= \overline{P_G \left( e^{\frac{2\pi i \nu_1 }{m}} \frac{1}{\bar{z}_{\gamma(1)}}, \dots, e^{\frac{2\pi i \nu_d }{m}} \frac{1}{\bar{z}_{\gamma(d)}} \right)}\\
			& = \sum_{\boldsymbol{\alpha}} \overline{a_{\boldsymbol{\alpha}}} e^{\frac{2\pi i}{m} (\nu_1 \alpha_1+ \dots + \nu_d \alpha_d)} \frac{1}{z_{\gamma(1)}^{\alpha_1} \dots z_{\gamma(d)}^{\alpha_d}}.
		\end{align*}
		Thus $Q(\sigma(\boldsymbol{z})) = Q(\boldsymbol{z})$. Consider the function $$ \psi_\epsilon (\boldsymbol{z}) = \frac{g(\boldsymbol{z})+ P_G(\boldsymbol{z})}{1+ Q(\boldsymbol{z}) }.$$
		Clearly, $\psi_\epsilon$ is a $G$-invariant rational function. Note that, on $\bT^d$, $|Q|<1$ as $|P_G|<1$ and so, the denominator of $\psi_\epsilon$ is non-vanishing in $\overline{\bD^d}$ which also shows that $\psi_\epsilon \in \cA(\bD^d)$. Also, for $\boldsymbol{z} \in \mathbb{T}^d$ it is clear that $w=P_G(\boldsymbol{z}) \in \bD$, $$\overline{w}=\overline{P_G(\frac{1}{\overline{\boldsymbol{z}}})}\quad  \text{and} \quad |g(\boldsymbol{z})|=1.$$ Thus $|\psi_\epsilon(\boldsymbol{z})|=1$. Moreover, for $\boldsymbol{z} \in K $
		\begin{align*}
			|f\circ \theta(\boldsymbol{z}) - \psi_\epsilon(\boldsymbol{z})| &\leq |f \circ \theta(\boldsymbol{z})-P_G(\boldsymbol{z})| + |P_G(\boldsymbol{z})- \psi_\epsilon(\boldsymbol{z})| \\
			& < \epsilon + |P_G(\boldsymbol{z})- \frac{g(\boldsymbol{z})+ P_G(\boldsymbol{z})}{1+ Q(\boldsymbol{z})}| < 5\epsilon \, (\text{ using } \eqref{Eq:estimate of polynomial} \text{ and } \eqref{Eq:estimations}).
		\end{align*}
		Since the numerator and the denominator of the holomorphic function $\psi_\epsilon$ are $G$-invariant holomorphic polynomials by the Chevalley-Shephard-Todd theorem, there exists a rational function $\Psi_\epsilon$ on $\theta(\bD^d)$ such that $$\psi_\epsilon = \Psi_\epsilon \circ \theta.$$ It is clear that $\Psi_\epsilon$ is a rational inner function in $\cA(\theta(\bD^d))$ and $$|f-\Psi_\epsilon|< 5\epsilon $$ on the compact set $S$. This completes the proof.
	\end{proof} 
	
	A landmark theorem of Fisher \cite{Fisher} says that the uniform closure of the convex hull of finite Blaschke products is the closed norm-unit ball of the disc algebra. See also \cite{Mar} for a version of Fisher's Theorem.  In \cite{Rud_CAG}, Rudin showed that the closed (with respect to sup-norm) convex hull of the inner functions on the polydisc is the norm-unit ball of the polydisc algebra. He proved it in a more general setting, namely, on compact abelian groups (note that $\mathbb{T}^d$ is a compact abelian group). But the \v Silov boundary $\partial \theta(\bD^d)$ of $\theta(\bD^d)$ is not necessarily orientable. For example, when $G=G(1,1,d)$, the \v Silov boundary $\partial \theta(\bD^d)$ of associated quotient domain $\theta(\bD^d)$ which is the symmetrized polydisc, is non-orientable for each even number $d$, see \cite{Morton} and hence $\partial \theta(\bD^d)$ cannot have a topological group structure. Therefore, Rudin's arguments cannot be applied for such quotient domains to get a Fisher type approximation result. Thus, we must approach this problem differently. Motivated by this, we provide an $L^2$-norm approximation on $\partial \theta(\bD^d)$ which is our Theorem \ref{Theorem D}.

	Recall that $H^\infty \left( \mathbb{D}^d \right)^G$ is the collection of all $G$-invariant bounded analytic functions on $\mathbb{D}^d$. Since the radial limits of functions in $H^\infty \left( \mathbb{D}^d \right)$ exist almost everywhere on $\bT^d$, we identify the $H^\infty(\mathbb{D}^d)$ functions with their radial limits and consequently as bounded $\nu$-measurable functions on $\mathbb{T}^d$. Denote by $\mathcal{Q}^G$ the closed convex hull (with respect to the $L^2(\nu)$-norm on $\mathbb{T}^d$) of the $G$-invariant rational inner functions in $\cA(\mathbb{D}^d)$. Recall that, the measure $\mu$ on the \v Silov boundary $\partial\theta(\bD^d)$ is the push-forward by $\theta$ of $\nu$ on $\bT^d$.

	\begin{proof}[\textbf{Proof of Theorem \ref{Theorem D}:}]
		Note that any function $g$ in $ H^\infty(\theta(\bD^d))$ can be identified with the $G$-invariant function $g\circ \theta$ in $H^\infty(\mathbb{D}^d)$ with $$\|g\|_{H^\infty(\theta(\bD^d))} = \|g\circ \theta\|_{H^\infty(\mathbb{D}^d)}.$$ Therefore, it is enough to prove that any function in the closed norm-unit ball of $H^\infty\left(\mathbb{D}^d\right)^G$ can be approximated by convex combination of $G$-invariant rational inner functions in $\cA(\mathbb{D}^d)$ in  $L^2(\mathbb{T}^d, \nu)$. 
		
		If possible, suppose that there exists $f$ in the closed norm-unit ball of  $H^\infty \left( \mathbb{D}^d \right)^G$ such that $f\notin \mathcal{Q}^G$. Then by the Hahn-Banach separation theorem, there exist $g \in L^2(\mathbb{T}^d, \nu)$ and real numbers $r_1$ and $ r_2$ such that
		\begin{align}\label{HBT}
			\operatorname{Re} \int_{\mathbb{T}^d} f(\boldsymbol{\zeta}) g(\boldsymbol{\zeta}) d\nu(\boldsymbol{\zeta}) < r_1 < r_2 < \operatorname{Re} \int_{\mathbb{T}^d} h(\boldsymbol{\zeta}) g(\boldsymbol{\zeta}) d\nu(\boldsymbol{\zeta})
		\end{align}
		for every $h\in \mathcal{Q}^G$. Invoke Theorem $\ref{Theorem C}$ to get  a sequence $\{ u_k\}$ of $G$-invariant rational inner functions in $\cA(\bD^d)$ such that $u_k \longrightarrow f$ uniformly on compact subsets of $\mathbb{D}^d$. Let $\psi$ be a polynomial in $z_1,\dots, z_d$ and $\overline{z_1}, \dots, \overline{z_d}$. Then, the Cauchy integral formula gives the following:
		$$
		\int_{\mathbb{T}^d} u_j(\boldsymbol{\zeta})  \psi(\boldsymbol{\zeta})  d\nu(\boldsymbol{\zeta})  \longrightarrow  \int_{\mathbb{T}^d} f(\boldsymbol{\zeta})  \psi(\boldsymbol{\zeta})  d\nu(\boldsymbol{\zeta}) 
		.$$
		We know that the polynomials in $z_1,\dots, z_d$ and $\overline{z_1}, \dots, \overline{z_d}$ are dense in $L^1\left( \mathbb{T}^d, \nu\right)$. Hence
		$$
		\int_{\mathbb{T}^d} u_j(\boldsymbol{\zeta})  \psi(\boldsymbol{\zeta})  d\nu(\boldsymbol{\zeta})   \longrightarrow  \int_{\mathbb{T}^d} f(\boldsymbol{\zeta})  \psi(\boldsymbol{\zeta})  d\nu(\boldsymbol{\zeta}) 
		$$
		for every $\psi \in L^1\left( \mathbb{T}^d, \nu \right)$. But $g\in L^2(\mathbb{T}^d, \nu) $ and hence $g \in L^1(\mathbb{T}^d,\nu)$. Therefore, we have
		$$
		\int_{\mathbb{T}^d} u_j(\boldsymbol{\zeta})  g(\boldsymbol{\zeta})  d\nu(\boldsymbol{\zeta})   \longrightarrow  \int_{\mathbb{T}^d} f(\boldsymbol{\zeta})  g(\boldsymbol{\zeta})  d\nu(\boldsymbol{\zeta}) .
		$$
		Since $u_j \in \mathcal{Q}^G$, equation \eqref{HBT} implies that
		$$
		\operatorname{Re} \int_{\mathbb{T}^d} f(\boldsymbol{\zeta}) g(\boldsymbol{\zeta})  d\nu(\boldsymbol{\zeta})  \leq r_1 < r_2 \leq \operatorname{Re} \int_{\mathbb{T}^d} f(\boldsymbol{\zeta}) g(\boldsymbol{\zeta})  d\nu(\boldsymbol{\zeta}) 
		$$
		which is a contradiction. Thus, $f\in \mathcal{Q}^G$.  This completes the proof.
	\end{proof}
	
	\section{ Structure of Rational Inner Functions}\label{RIF}
	The goal of this section is to study the rational inner functions on $\theta(\bD^d)$. For a polynomial $\xi \in \mathbb{C}[\boldsymbol{z}]$ with degree $\boldsymbol{n} =(n_1,\dots,n_d)$, the polynomial $\tilde{\xi}$ is defined as $$\tilde{\xi}(\boldsymbol{z})= \boldsymbol{z}^{\boldsymbol{n}} \overline{\xi\left( \frac{1}{\overline{\boldsymbol{z}}} \right)}.$$ 
	We shall start by looking at a description of the \v Silov boundary $\partial \theta(\bD^d)$. Since $\partial \theta(\bD^d)$ is the same as $\theta(\bT^d)$ and $\theta_j$'s are described by \eqref{Eq:theta-j} and \eqref{Eq:theta-d}, we conclude that, if $\boldsymbol{p}=(p_1, \dots, p_d) \in \partial \theta(\bD^d)$ then 
	\begin{align}\label{Eq:Shilov-bdry}
		p_j = \overline{p_{d-j}} p_d^t \text{ for } 1\leq j <d \text{ and } |p_d|=1.
	\end{align}
	
	The following algebraic lemmas will be used in the proof of the main result of this section.
	\begin{lemma}\label{L1}
		Let $\xi $ and $\eta $ be two polynomials in $\bC[\boldsymbol{z}]$ such that $\xi(0,0)\neq 0$ and $\eta(0,0)\neq 0$. Then the following hold:
		\begin{enumerate}
			\item $\widetilde{\widetilde{\xi}} = \xi $ and
			\item $\widetilde{\xi \eta} = \widetilde{\xi}\widetilde{\eta}$.
		\end{enumerate}
		Moreover, if $\xi$ and $\eta$ are two distinct irreducible polynomials, then the following are also true:
		\begin{enumerate}
			\item[(3)] If $\xi$ divides $\widetilde{\xi}$, then $\xi$ is a unimodular scalar multiple of $\widetilde{\xi}$ and
			\item[(4)] If $\xi$ divides $\widetilde{\eta}$ then $\eta$ is a non-zero scalar multiple of $\widetilde{\xi}$.
		\end{enumerate}
	\end{lemma}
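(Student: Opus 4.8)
The plan is to treat all four assertions through a single bookkeeping device: the effect of the tilde operation on multidegrees. Writing $\xi(z)=\sum_{\alpha\le n}c_\alpha z^\alpha$ with $n=(n_1,\dots,n_d)$ the multidegree, the definition gives $\widetilde{\xi}(z)=\sum_{\alpha\le n}\overline{c_\alpha}\,z^{\,n-\alpha}$, so that the coefficient of $z^\beta$ in $\widetilde\xi$ is $\overline{c_{n-\beta}}$. The first thing I would record is that the hypothesis $\xi(0)\neq 0$, i.e.\ $c_0\neq 0$, forces the monomial $z^{n}$ to occur in $\widetilde\xi$, whence $\widetilde\xi$ has the \emph{same} multidegree $n$ as $\xi$. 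This single observation is what makes the operation an involution and is the linchpin of the whole lemma.

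For (1), once $\deg\widetilde\xi=\deg\xi=n$, I would simply apply the definition twice: reading off coefficients, the coefficient of $z^\beta$ in $\widetilde{\widetilde\xi}$ is $\overline{\overline{c_{n-(n-\beta)}}}=c_\beta$, so $\widetilde{\widetilde\xi}=\xi$. (Equivalently, substitute $z\mapsto 1/\bar z$ and use $z^n(1/z)^n=1$.) For (2), I would note that over the field $\mathbb{C}$ the multidegree is additive, $\deg(\xi\eta)=\deg\xi+\deg\eta=n+m$, and then the identity is immediate from the definition:
\[
\widetilde{\xi\eta}(z)=z^{\,n+m}\,\overline{\xi(1/\bar z)}\,\overline{\eta(1/\bar z)}=\big(z^{n}\overline{\xi(1/\bar z)}\big)\big(z^{m}\overline{\eta(1/\bar z)}\big)=\widetilde\xi(z)\,\widetilde\eta(z).
\]
Notably (2) needs only additivity of multidegrees, not the nonvanishing hypothesis.

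For (3), the equal-degree observation does the work: from $\xi\mid\widetilde\xi$ and $\deg\widetilde\xi=\deg\xi$ the cofactor must be a nonzero constant, say $\widetilde\xi=c\,\xi$. Applying the tilde once more and using (1) together with $\widetilde{c\xi}=\bar c\,\widetilde\xi$ yields $\xi=|c|^2\xi$, hence $|c|=1$, which is the claim. For (4), I would write $\widetilde\eta=\xi h$, apply the tilde, and invoke (1) and (2) to get $\eta=\widetilde{\widetilde\eta}=\widetilde\xi\,\widetilde h$. Since $\xi$ is irreducible it is in particular non-constant, and by the equal-degree observation so is $\widetilde\xi$; irreducibility of $\eta$ then forces $\widetilde h$ to be a unit, i.e.\ a nonzero scalar, giving $\eta=(\text{const})\,\widetilde\xi$.

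The step I expect to require the most care is the multidegree bookkeeping in several variables. Unlike the one-variable case, the ``corner'' coefficient $c_n$ (the coefficient of $z_1^{n_1}\cdots z_d^{n_d}$) can vanish even when $\xi(0)\neq 0$, so $\widetilde\xi$ itself may vanish at the origin; hence the involution and the nonvanishing hypothesis cannot be applied blindly to $\widetilde\eta$ or to the cofactor $h$. The arrangement above is designed to avoid this: in (4) I apply the tilde to the already-factored $\widetilde\eta=\xi h$ rather than factoring $\widetilde\eta$ directly, and I use that (2) requires no constant-term assumption, so that only the involution (1)---applied to $\eta$, where $\eta(0)\neq0$ is available---is ever needed.
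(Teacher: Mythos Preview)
Your proof is correct and follows essentially the same approach as the paper's: both rely on the equal-multidegree observation $\deg\widetilde\xi=\deg\xi$ (from $\xi(0)\neq0$), establish (2) by the direct computation $z^{n+m}\overline{(\xi\eta)(1/\bar z)}=\widetilde\xi\,\widetilde\eta$, and prove (3) and (4) by applying the tilde to the divisibility relation and invoking (1) and (2). Your treatment of (3) is a minor variant---you use degree equality to see the cofactor is a scalar before applying the tilde, whereas the paper applies the tilde first and then uses irreducibility of $\xi$---but the substance is the same, and your explicit remark that (2) needs only additivity of multidegree (and not the constant-term hypothesis) is exactly what the paper uses implicitly when it applies (2) to the cofactor $\psi$ in parts (3) and (4).
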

	\begin{proof}
		The first two parts are obvious. So, we shall prove $(3)$ and $(4)$.
		Suppose $\xi$ divides $\widetilde{\xi}$. Then there exists $\psi\in\mathbb{C}[\boldsymbol{z}]$ such that $\widetilde{\xi}=\psi\xi$. Thus $\widetilde{\widetilde{\xi}}=\widetilde{\psi}\widetilde{\xi}$. Hence, by part $(1)$, we have $\xi=\widetilde{\psi}\widetilde{\xi}$.
		Since $\xi$ is irreducible, it follows that $\psi$ is a unimodular scalar.\\
		Finally, suppose that $\xi$ divides $\widetilde{\eta}$. Then there exists $\psi\in\mathbb{C}[\boldsymbol{z}]$ such that $\widetilde{\eta}= \psi\xi$. This gives that $$\eta=\widetilde{\widetilde{\eta}}=\widetilde{\psi}\widetilde{\xi}.$$ Since $\eta$ is irreducible, $\psi$ must be a non-zero scalar. This proves $(4)$.
	\end{proof}
	The following lemma about $G$-invariant rational functions on $\mathbb{D}^d$ is crucial for us. 
	\begin{lemma}\label{L2}
		Let $f$ be a $G$-invariant rational function on $\mathbb{D}^d$. Then $f$ can be expressed as a quotient of two $G$-invariant polynomials.
	\end{lemma}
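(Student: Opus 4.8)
The plan is to start from an arbitrary symmetric rational function $f = p/q$ on $\mathbb{D}^d$, where $p, q \in \mathbb{C}[z_1, \dots, z_d]$ with $q \neq 0$, and to manufacture a symmetric denominator by averaging over the symmetric group. Specifically, I would set
\begin{align*}
Q(z) = \prod_{\sigma \in S_d} \sigma(q)(z),
\end{align*}
which is manifestly symmetric since permuting the variables merely permutes the factors of the product. The key observation is that $Q = q \cdot r$, where $r = \prod_{\sigma \neq \mathrm{id}} \sigma(q)$, so that
\begin{align*}
f = \frac{p}{q} = \frac{p \cdot r}{q \cdot r} = \frac{p \cdot r}{Q}.
\end{align*}
Thus $f$ is expressed as a quotient with symmetric denominator $Q$, and it remains only to show that the numerator $P := p \cdot r$ is symmetric as well.

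The numerator's symmetry is where I expect the only real content to lie, and it follows from a short formal argument. Since $f$ is symmetric, for every $\sigma \in S_d$ we have $\sigma(f) = f$, i.e. $\sigma(P)/\sigma(Q) = P/Q$ as rational functions. But I would arrange that $Q$ is already symmetric, so $\sigma(Q) = Q$, and cross-multiplying gives $\sigma(P) \cdot Q = P \cdot Q$ in the polynomial ring $\mathbb{C}[z_1, \dots, z_d]$. Because this ring is an integral domain and $Q \neq 0$, cancellation yields $\sigma(P) = P$ for all $\sigma \in S_d$; hence $P$ is symmetric. Therefore $f = P/Q$ is a quotient of two symmetric polynomials, as desired.

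The main (and essentially only) obstacle is the bookkeeping around well-definedness: I must verify that $Q \not\equiv 0$ so that the manipulations are legitimate, which is immediate since each factor $\sigma(q)$ is a nonzero polynomial and $\mathbb{C}[z_1,\dots,z_d]$ is an integral domain. I would also want to note that the equality $\sigma(f) = f$ holds as an identity of rational functions on the Zariski-dense domain where everything is defined, so that cross-multiplication and cancellation in the polynomial ring are justified. Beyond this, the argument is purely formal and requires no analytic input; the symmetrization of the denominator and the cancellation trick for the numerator together give the claim with essentially no computation.
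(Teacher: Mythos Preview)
Your argument is correct: symmetrizing the denominator by $Q=\prod_{\sigma\in S_d}\sigma(q)$ and then cancelling $Q$ from $\sigma(P)Q=PQ$ in the integral domain $\mathbb{C}[z_1,\dots,z_d]$ does prove the lemma as stated, and the well-definedness remarks you flag are the only points needing care.

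However, the paper proceeds differently, and the difference matters for how the lemma is used. The paper starts from the \emph{coprime} representation $f=\xi/\eta$ and shows that $\sigma(\xi)=\lambda_\sigma\xi$, $\sigma(\eta)=\lambda_\sigma\eta$ for scalars $\lambda_\sigma$; it then proves the character $\sigma\mapsto\lambda_\sigma$ is trivial by observing that $\lambda_{(i\,j)}\neq 1$ would force $z_i-z_j$ to divide both $\xi$ and $\eta$, contradicting coprimality. This yields the stronger conclusion that the coprime numerator and denominator are \emph{themselves} symmetric. In the proof of Theorem~A the authors have arranged $f\circ\pi=\tau z^n\widetilde{\psi}/\psi$ with $z^n\widetilde{\psi}$ and $\psi$ coprime, and they invoke Lemma~\ref{L2} to conclude that precisely these two polynomials are symmetric. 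Your version only produces \emph{some} symmetric representation $P/Q$, which need not be the given coprime one; so while it proves the lemma as written, it would not plug directly into the later argument without an extra step (e.g.\ deducing from $P/Q=z^n\widetilde{\psi}/\psi$ with the right-hand side coprime that $\psi$ divides $Q$ symmetrically and hence is symmetric). Your approach is quicker and more elementary; the paper's approach buys exactly the refinement the main theorem needs.
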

	\begin{proof}
		If $f$ is the identically zero function, then there is nothing to prove. So, we assume $f$ to be non-zero. Since $f$ is a rational function, there exist polynomials $\xi$ and $\eta$ in $\mathbb{C}[\boldsymbol{z}]$ which are co-prime such that
		$f=\frac{\xi}{\eta}.$
		Let $\sigma\in G$. Then $\sigma \cdot f=\frac{\sigma\cdot \xi}{\sigma \cdot \eta }$. Since $f$ is $G$-invariant, we have
		\begin{align}\label{Sym}
			\frac{\xi}{\eta} = f = \sigma \cdot f =\frac{\sigma \cdot \xi }{\sigma \cdot \eta}.
		\end{align}
		Thus, $\xi (\sigma \cdot \eta)=\eta (\sigma \cdot \xi)$. Since $\xi$ and $\eta$ are co-prime, $\xi$ divides $\sigma\cdot \xi $ and $\eta$ divides $\sigma\cdot \eta$. Also, note that the total degrees of $\psi$ and $\sigma \cdot \psi$ are same for any polynomial $\psi\in\mathbb{C}[\boldsymbol{z}]$. Thus,
		$$\sigma\cdot \xi=\lambda_{\sigma}\xi \quad \text{ and } \quad \sigma\cdot \eta=\mu_{\sigma}\eta$$
		for some non-zero scalars $\lambda_{\sigma}$ and $\mu_{\sigma}$. From equation \eqref{Sym}, it follows that $\lambda_{\sigma}=\mu_{\sigma}$.
		The map $$\sigma\mapsto \lambda_{\sigma}$$ defines a group homomorphism from $G$ to the multiplicative group $\mathbb{C}\setminus \{0\}$. But $G$ being a finite group, it is actually a homomorphism from $G$ to $\mathbb{T}$.
		
		If we show that this group homomorphism is the trivial one, then $\sigma\cdot \xi =\xi$ and $\sigma \cdot \eta=\eta$ proving that $\xi$ and $\eta$ are $G$-invariant polynomials. Since the group $G$ is generated by finitely many pseudo-reflections, it is enough to prove that these generators are mapped to $1$ by the aforementioned homomorphism. Let $\sigma$ be such a pseudo-reflection. Then $\operatorname{Ker}(I_d - \sigma)$ is a $d-1$ dimensional subspace of $\bC^d$ and so, there is a linear polynomial $L_\sigma$ in $\bC[\boldsymbol{z}]$ such that $$\operatorname{Ker}(I_d - \sigma) = \{\boldsymbol{z}\in \bC^d: L_\sigma(\boldsymbol{z})=0\}.$$ If possible, suppose that $\lambda_{\sigma}\neq{1}$.
		Then $\sigma\cdot \xi =  \lambda_{\sigma}\xi$ gives
		\begin{align}\label{Eq: invariance}
			\xi(\sigma(\boldsymbol{z})) = \lambda_{\sigma}\xi(\boldsymbol{z}).
		\end{align}
		For $\boldsymbol{z} \in \operatorname{Ker}(I_d - \sigma)$, $\sigma(\boldsymbol{z}) = \boldsymbol{z}$ and so, by \eqref{Eq: invariance}, $\xi(\boldsymbol{z})=0$. Therefore, $L_\sigma$ divides $\xi$. Similarly, $L_\sigma $ also divides $\eta$. But this is not possible as $\xi$ and $\eta$ are co-prime with each other. So, we must have $\lambda_\sigma =1$ and hence the above group homomorphism is trivial.
	\end{proof}
	
	\begin{lemma}\label{L3}
		Let $\xi$ be a polynomial in $\mathbb{C}[\boldsymbol{z}]$. Then $\frac{\tilde{\xi}}{\xi}$ can be written as
		$$ \tau \frac{\tilde{\xi}_{j_1} \tilde{\xi}_{j_2}\dots\tilde{\xi}_{j_l}}{\xi_{j_1}\xi_{j_2}\dots \xi_{j_l}}$$
		for some $\tau\in\mathbb{T}$ and $j_1, j_2,\dots, j_l$ such that $ \tilde{\xi}_{j_1} \tilde{\xi}_{j_2}\dots\tilde{\xi}_{j_l}$ and $\xi_{j_1}\xi_{j_2}\dots \xi_{j_l}$ are co-prime.
	\end{lemma}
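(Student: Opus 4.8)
The plan is to factor $\xi$ into irreducibles and track how the involution $\eta \mapsto \widetilde{\eta}$ permutes these factors, using Lemma~\ref{L1} to control which factors cancel in the quotient $\widetilde{\xi}/\xi$. A preliminary observation I would record is that the tilde operation is \emph{fully} multiplicative: although Lemma~\ref{L1}(2) is stated under a hypothesis on the constant term, the identity $\widetilde{pq}=\widetilde{p}\,\widetilde{q}$ holds for all nonzero $p,q$, because the multidegree is additive on products (for each variable $z_j$ the degree of a product is the sum of the degrees, there being no cancellation of leading terms over an integral domain). I would also note the degenerate evaluations $\widetilde{z_j}=1$ and $\widetilde{c}=\bar c$ for a constant $c$.

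Write $\xi = c\prod_{i=1}^{r}\xi_i^{a_i}$ with $c\in\mathbb{C}^*$ and $\xi_i$ pairwise non-associate irreducibles, some of which may be the coordinate functions $z_j$. Multiplicativity gives $\widetilde{\xi}=\bar c\prod_i \widetilde{\xi_i}^{\,a_i}$. The structural claim I would then prove is that $\widetilde{\xi_i}$ is again irreducible whenever $\xi_i$ is not a coordinate function: if $\widetilde{\xi_i}=PQ$ nontrivially then $\xi_i=\widetilde{P}\widetilde{Q}$ by Lemma~\ref{L1}(1); one checks directly from the definition that $\widetilde{\xi_i}$ has no coordinate factor, since $\deg_{z_j}\xi_i$ is attained by some monomial of $\xi_i$, so neither $P$ nor $Q$ is a monomial, whence $\widetilde{P},\widetilde{Q}$ are non-units and irreducibility of $\xi_i$ is contradicted. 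For a coordinate factor $\xi_i=z_j$ one simply has $\widetilde{\xi_i}=1$.

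Next I would analyze the cancellation in $\widetilde{\xi}/\xi = (\bar c/c)\prod_i \widetilde{\xi_i}^{\,a_i}/\xi_i^{a_i}$ by pairing factors. Because the $\xi_i$ are pairwise non-associate and each $\widetilde{\xi_i}$ is a single prime (or a unit), $\widetilde{\xi_i}$ is associate to at most one $\xi_{i'}$; set $\sigma(i)=i'$ in that event. Lemma~\ref{L1}(4) shows that if $\widetilde{\xi_i}$ is associate to $\xi_{i'}$ with $i\neq i'$ then $\xi_i$ is associate to $\widetilde{\xi_{i'}}$, so $\sigma$ is a partial involution, while Lemma~\ref{L1}(3) governs its fixed points, where $\widetilde{\xi_i}$ is a \emph{unimodular} multiple of $\xi_i$. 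The bookkeeping then runs as follows: fixed points and two-cycles of equal multiplicity cancel completely and contribute only a scalar; a two-cycle $\{i,i'\}$ with $a_i\neq a_{i'}$ leaves $\widetilde{\xi_j}^{\,b}/\xi_j^{\,b}$ where $j$ is the index of larger multiplicity and $b=|a_i-a_{i'}|$; and every index $i$ outside the domain of $\sigma$ (which, by the involution property, coincides with the indices surviving in the denominator, and includes every coordinate factor $z_j$) survives intact as $\widetilde{\xi_i}^{\,a_i}/\xi_i^{a_i}$. Collecting the surviving indices as $j_1,\dots,j_l$ yields the asserted form, and coprimality of the resulting numerator and denominator is immediate because every common factor has been removed.

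The remaining point is that the leftover scalar $\tau$ is unimodular: it is the product of the unimodular quantity $\bar c/c$, the unimodular scalars supplied by the fixed points through Lemma~\ref{L1}(3), and, for each unequal two-cycle $\{i,i'\}$ with $\widetilde{\xi_i}=\lambda\xi_{i'}$ and $\widetilde{\xi_{i'}}=\mu\xi_i$, a factor of the form $(\lambda\mu)^{\min(a_i,a_{i'})}$; applying tilde to the first relation and invoking Lemma~\ref{L1}(1) forces $\bar\lambda\mu=1$, so $\lambda\mu=\lambda/\bar\lambda\in\mathbb{T}$. Hence $\tau\in\mathbb{T}$. I expect the main obstacle to be this final bookkeeping rather than any single estimate: getting the surviving numerator and denominator to carry \emph{exactly matched} indices and exponents in the unequal-multiplicity two-cycles, and simultaneously verifying that all accumulated scalars are unimodular, is where the involution structure furnished by Lemma~\ref{L1}(3),(4) must be used most carefully.
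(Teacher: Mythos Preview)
Your argument is correct and follows essentially the same route as the paper: factor $\xi$ into irreducibles, use Lemma~\ref{L1} to analyze how the tilde involution permutes the factors, cancel the paired ones, and verify that the residual scalar is unimodular. The paper carries this out more tersely by listing irreducible factors with repetition rather than tracking multiplicities and a partial involution $\sigma$, but the mechanism is the same.

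Your version is in fact more careful on two points the paper glosses over. First, you note that the multiplicativity $\widetilde{pq}=\widetilde{p}\,\widetilde{q}$ holds without the hypothesis $p(0)\neq 0$, $q(0)\neq 0$ of Lemma~\ref{L1}(2); the paper invokes that lemma on irreducible factors which might well vanish at the origin. Second, you explicitly handle coordinate factors $z_j$ (for which $\widetilde{z_j}=1$) and the scalar $c$, and you give the clean computation $\bar\lambda\mu=1\Rightarrow\lambda\mu\in\mathbb{T}$ for two-cycle scalars, where the paper simply writes $\bar\beta/\beta$. These refinements do not change the strategy, but they close small gaps in the paper's exposition.
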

	\begin{proof}
		Write $\xi$ as $\xi_1\xi_2\dots \xi_r$ for some irreducible polynomials $\xi_1,\dots, \xi_r\in\mathbb{C}[\boldsymbol{z}]$. Then, by part $(2)$ of Lemma \ref{L1}, we have
		$$\tilde{\xi}=\tilde{\xi}_1 \tilde{\xi}_2\dots \tilde{\xi}_r.$$
		So,
		\begin{align*}
			\frac{\tilde{\xi}}{\xi}=\frac{\tilde{\xi}_1 \tilde{\xi}_2 \dots\tilde{\xi}_r}{\xi_1 \xi_2\dots \xi_r}.
		\end{align*}
		Note that $\tilde{\xi}$ and $\xi$ can have common factors. If $\xi_j$ divides $\tilde{\xi}_j$ for some $j$, then by part $(3)$ of Lemma \ref{L1}, $\frac{\tilde{\xi}_j}{\xi_j} \in \mathbb{T}$.
		Since $\xi_j$'s are irreducible, any such common factor is divisible by $\xi_i$ for some $i$. Without loss of generality, we can assume that  $\xi_1$ divides $\tilde{\xi}_2$. Then by part $(4)$ of Lemma \ref{L1}, $\xi_2 = \beta \tilde{\xi}_1$. Also $\tilde{\xi}_2 = \bar{\beta} \xi_1$. So, in this case, we have
		$$
		\frac{\tilde{\xi}}{\xi} = \frac{\tilde{\xi}_1 ( \bar{\beta} \xi_1 ) \dots \tilde{\xi}_r}{\xi_1 (\beta \tilde{\xi}_1) \dots \xi_r} = \frac{\bar{\beta}}{\beta} \frac{\tilde{\xi}_3 \dots\tilde{\xi}_r}{\xi_3 \dots \xi_r}.
		$$
		After such cancellations, we shall end up with
		$$
		\frac{\tilde{\xi}}{\xi}= \tau \frac{\tilde{\xi}_{j_1} \tilde{\xi}_{j_2}\dots\tilde{\xi}_{j_l}}{\xi_{j_1}\xi_{j_2}\dots \xi_{j_l}}
		$$
		for some $\tau\in\mathbb{T}$ and $j_1, j_2,\dots, j_l$ such that $ \tilde{\xi}_{j_1} \tilde{\xi}_{j_2}\dots\tilde{\xi}_{j_l}$ and $\xi_{j_1}\xi_{j_2}\dots \xi_{j_l}$ are co-prime. This completes the proof.
		
	\end{proof}
	
	\begin{remark}
		It is worth mentioning that in \cite{AMS} the factorization and division results, akin to those in Lemma \ref{L1} and Lemma \ref{L3}, have been carried out in the context of toral and atoral polynomials.  See also \cite{DS} for recent work on this topic from an operator-theoretic perspective. 
	\end{remark}
	
	Now, we are ready to prove the main theorem about the structure of rational inner functions on $\theta(\bD^d)$. This is Theorem \ref{Theorem A}. We restate it for the reader's convenience.
	\begin{thm}
		Given a rational inner function $f$ on $\theta(\bD^d)$, there exist a non-negative integer $k$, $\tau\in\mathbb{T}$ and a polynomial $g$ with no zero in $\theta(\bD^d)$ such that
		\begin{align}\label{THM_RIF-G}
			f(p_1, \dots, p_{d-1}, p_d)=\tau p_d^{k} \ \frac {\overline{g\left(\frac{\overline{p_{d-1}}}{\bar{p}_d^t}, \frac{\overline{p_{d-2}}}{\bar{p}_d^t}, \dots, \frac{\overline{p_1}}{\bar{p}_d^t}, \frac{1}{\bar{p}_d} \right)}}{g\left(p_1,\dots,p_{d-1}, p_d\right)}
		\end{align}
		Conversely, any rational function of the form \eqref{THM_RIF-G} is inner. Moreover, any inner function $f\in \cA(\theta(\bD^d))$ is a rational function of the form \eqref{THM_RIF-G} with the additional property that $g$ has no zeros in $\overline{\theta(\bD^d)}$.
	\end{thm}
	\begin{proof}
		Let $f$ be a rational inner function on $\theta(\bD^d)$. Then $f\circ \theta : \mathbb{D}^d \rightarrow \overline{\mathbb{D}}$ is a $G$-invariant rational inner function on $\mathbb{D}^d$. Invoke Theorem 5.2.5 in \cite{Rudin} to get a polynomial $\xi$ with no zeros in $\mathbb{D}^d$, a $\tau_1\in\mathbb{T}$, and $\boldsymbol{n}=(n_1,n_2,\dots,n_d)$ such that
		\begin{align}\label{RIF-poly}
			f\circ \theta (\boldsymbol{z})= \tau_1 \boldsymbol{z}^{\boldsymbol{n}}\frac{\widetilde{\xi}(\boldsymbol{z})}{\xi(\boldsymbol{z})}.
		\end{align}
		Applying Lemma \ref{L3}, we can find a polynomial $\psi$ such that
		$$
		f \circ \theta = \tau \boldsymbol{z}^{\boldsymbol{n}} \frac{\widetilde{\psi}}{\psi}
		$$
		for some $\tau\in\mathbb{T}$ with $\widetilde{\psi}$ and $\psi$ being co-prime.
		
		Since $\xi$ has no zeros in $\mathbb{D}^d$, $\psi$ cannot have any zero in $\mathbb{D}^d$. Therefore $\boldsymbol{z}^{\boldsymbol{n}} \widetilde{\psi}$ and $\psi$ are co-prime. By applying Lemma \ref{L2}, we conclude that $\boldsymbol{z}^{\boldsymbol{n}} \widetilde{\psi}$ and $\psi$ are both $G$-invariant polynomials. Note that, if $\varphi$ is a $G$-invariant polynomial, then by Chevalley-Shephard-Todd theorem there exists a polynomial $\Xi \in \bC[\boldsymbol{z}]$ such that $\varphi(\boldsymbol{z})= \Xi \circ \theta(\boldsymbol{z})$. If  $\Xi(\boldsymbol{z})= \sum_{\boldsymbol{\alpha}} a_{\boldsymbol{\alpha}}\boldsymbol{z}^{\boldsymbol{\alpha}}$, then 
		\begin{align*}
			\varphi(\boldsymbol{z})&= \sum_{\boldsymbol{\alpha}} a_{\boldsymbol{\alpha}} \prod_{j=1}^d \theta_j(\boldsymbol{z})^{\alpha_j}.
		\end{align*}
		Recall that, $\theta_j(\boldsymbol{z}) = E_j(z_1^m, \dots, z_d^m)$ for $1\leq j \leq d-1$ and $\theta_d(\boldsymbol{z})=(z_1\dots z_d)^{m/t}$. Now, if $\varphi$ is not a constant, then $\Xi$ is non-constant polynomial and suppose the degree of $\Xi$ is $(r_1, \dots, r_d)$. Therefore the degree of $\varphi$ will be of the form:
		\begin{align}\label{Eq: polydegree}
			(n,\dots, n) \text{ with } n= m \sum_{j=1}^{d-1} r_j + \frac{m}{t}r_d.
		\end{align}
		
		Suppose $\boldsymbol{\ell}=(\ell,\dots, \ell)$ is the degree of $\psi$ and it is of the form \eqref{Eq: polydegree} with $\ell= m \sum_{j=1}^{d-1} s_j + \frac{m}{t}s_d$. Then the degree of $\boldsymbol{z}^{\boldsymbol{n}} \widetilde{\psi}$ is $(n_1 + \ell, n_2 + \ell, \dots , n_d +\ell)$. Since  $\boldsymbol{z}^{\boldsymbol{n}} \widetilde{\psi}$ is a $G$-invariant polynomial, for every $k$, $$ n_k +\ell = m \sum_{j=1}^{d-1} r_j + \frac{m}{t}r_d, $$ for some non-negative integers $r_1,\dots, r_d$. Hence, $\boldsymbol{n}$ is of the form \eqref{Eq: polydegree} and we rename it as $\boldsymbol{n}=(\kappa, \dots, \kappa)$ where $\kappa = m \sum_{j=1}^{d-1}(r_j-s_j) + \frac{m}{t}(r_d-s_d)$.
		
		Since $\psi$ is a $G$-invariant polynomial, by Chevalley-Shephard-Todd theorem there exists a polynomial $g$ such that $\psi=g\circ \theta$. Now,
		\begin{align*}
			f\circ\theta(\boldsymbol{z})&= \tau \boldsymbol{z}^{\boldsymbol{n}}\frac {\widetilde{g\circ\theta}(\boldsymbol{z})}{g\circ\theta(\boldsymbol{z})}\\
			&=\tau \boldsymbol{z}^{\boldsymbol{n}} \boldsymbol{z}^{\boldsymbol{\ell}}\frac {\overline{{g\circ\theta}(\frac{1}{\overline{z_1}}, \frac{1}{\overline{z_2}},\dots, \frac{1}{\overline{z_d}})}}{g\circ\theta(z_1, z_2\dots, z_d)}\\
			&=\tau \boldsymbol{z}^{\boldsymbol{n}} \boldsymbol{z}^{\boldsymbol{\ell}} \frac{\overline{g\left(\theta_{1}(\frac{1}{\bar{\boldsymbol{z}}}), \dots,\theta_{d-1}(\frac{1}{\bar{\boldsymbol{z}}}), (\frac{1}{\overline{z_1z_2\dots z_d}})^{m/t}\right)}}{g\left(p_1,\dots,p_{d-1}, p_d\right)}
			\\
			&=\tau \boldsymbol{z}^{\boldsymbol{n}+\boldsymbol{\ell}}\frac {\overline{g\left(\frac{\overline{p_{d-1}}}{\bar{p}_d^t}, \frac{\overline{p_{d-2}}}{\bar{p}_d^t}, \dots, \frac{\overline{p_1}}{\bar{p}_d^t}, \frac{1}{\bar{p}_d} \right)}}{g\left(p_1,\dots,p_{d-1}, p_d\right)}.
		\end{align*}
		Therefore,
		$$f(p_1, \dots, p_{d-1}, p_d)=\tau p_d^{\ell_0} \ \frac {\overline{g\left(\frac{\overline{p_{d-1}}}{\bar{p}_d^t}, \frac{\overline{p_{d-2}}}{\bar{p}_d^t}, \dots, \frac{\overline{p_1}}{\bar{p}_d^t}, \frac{1}{\bar{p}_d} \right)}}{g\left(p_1,\dots,p_{d-1}, p_d\right)}
		$$ where $\ell_0 = m \sum_{j=1}^{d-1}r_j + r_d$.
		
		Conversely, consider a function $f$ on $\theta(\bD^d)$ of the form \eqref{THM_RIF-G}. From \cite[Theorem 5.2.5]{Rudin}, it can be seen that $g$ has no zeros in $\theta(\bD^d) \cup \partial \theta(\bD^d)$ except possibly on a subset of $\partial \theta(\bD^d)$ of $\mu$ measure zero. For $(p_1,\dots, p_d)\in \partial \theta(\bD^d)$, by \eqref{Eq:Shilov-bdry} we have $\overline{p_j}= p_{d-j} \overline{p_d}^t$ for $1\leq j \leq d-1$ and $|p_d|=1$. Therefore, for almost every point $(p_1,\dots,p_d)$ in $\partial \theta(\bD^d)$, $$|f(p_1,\dots, p_d)| =1.$$ Hence $f$ is inner.
		
		Finally, suppose $f\in \cA( \theta(\bD^d))$ is inner. Then  $f \circ \theta \in \cA(\mathbb{D}^d)$ is an inner function. Thus, by Theorem 5.2.5 in \cite{Rudin}, $f\circ \theta$ is of the form \eqref{RIF-poly} such that $\xi$ has no zeros in $\overline{\mathbb{D}^d}$. Again, applying an argument similar to above, we get $f$ to be of the form \eqref{THM_RIF-G} as well as the fact that the polynomial in the denominator has no zeros in $\overline{\theta(\bD^d)}$. This completes the proof.
	\end{proof}
	
	\begin{remark}
		One consequence of the structure of rational inner functions is that there are no non-constant rational inner functions dependent solely on the variables 
		$p_1, p_2, \dots, p_{d-1} $.
		
	\end{remark}
	\section{Comments on operator-valued functions}\label{Operator-Valued}
	This section is dedicated to the case $d=2$. We start by developing the necessary tools to support this transition, specifically the Pick-Nevanlinna interpolation problems.
	
	\subsection{Test functions and the Pick-Nevanlinna interpolation  for $\theta(\bD^2)$:}

	\quad
	
	We begin this subsection by defining the terminology alluded to in the title above. A collection $\Psi$ of $\bC$-valued functions on a set $X$ is called a set of {\em
		test functions} (see \cite{DM}) if the following conditions hold:
	\begin{enumerate}
		\item $\sup_{\psi\in \Psi} |\psi(x)|<1$ for all $x\in X$;
		\item for each finite subset $F$ of $X$, the collection $\{\psi|_{F}: \psi\in \Psi\}$ together with the constant functions generates the algebra of all $\bC$-valued functions on $F$.
	\end{enumerate}
	The second condition is not essential for the development of the theory, see \cite{BH}. The collection $\Psi$ inherits a subspace topology of the space of all bounded functions from $X$ to $\overline{\mathbb D}$ endowed with topology of pointwise convergence. We shall denote the algebra of bounded continuous functions over $\Psi$ with pointwise algebra operation by $C_b(\Psi)$.  Define an injective mapping $E:X\rightarrow C_b(\Psi)$ as $E(x)= e_x$, where $$e_x(\psi)=\psi(x).$$Let $\cH$ be a Hilbert space.  We say that a map $\Lambda: X\times X\rightarrow \mathcal{B}(C_b(\Psi), \mathcal{B}{(\cH}))$ is a completely positive kernel if the following hold:
	\begin{align*}
		\sum_{i,j=1}^n T_j^* \Lambda(x_i, x_j)\left(\overline{f_j}f_i\right)T_i\geq0
	\end{align*}
	for any $n\geq 1$, any $n$ points $x_1, \dots, x_n\in X $, any $n$ operators $T_1, \dots, T_n $ in $\cB(\cH)$ and any $n$ functions $f_1, \dots, f_N$ in $C_b(\Psi)$.
	Let $\mathcal{K}_{\Psi}(\cH)$ be the collection of $\cB(\cH)$-valued positive kernels $k$ on $X$ such that for each $\psi\in\Psi$, the function
	$$(x, y)\mapsto\left(1-\psi(x)\overline{\psi(y)}\right)k(x,y)$$
	is a $\cB(\cH)$-valued positive kernel.  We say that $f: X \rightarrow \cB(\cH)$ is in $H^\infty_{\Psi}(\cH)$ if there is a non-negative constant $C$ such that the function
	\begin{align}\label{HH}
		(x, y)\mapsto\left(C^2 k(x,y) - f(x) k(x,y) f(y)^*\right) 
	\end{align}
	is a $\cB(\cH)$-valued positive kernel for each $k$ in $\mathcal{K}_{\Psi}(\cH)$. If $f$ is in $H^\infty_{\Psi}(\cH)$, then we denote by $C_f$ the smallest $C$ which satisfies \eqref{HH}. The collection of maps $f\in H^\infty_{\Psi}(\cH)$ for which
	$C_f$ is no larger than $1$ is called the {\em $\Psi$-Schur-Agler class} and it is denoted by $\cS\cA_{\Psi}(\cH).$

	It is straightforward to observe that the co-ordinate functions serve as test functions in $\mathbb{D}^d$. The test functions are also known for other domains, see \cite{COMM-LIFT, Tirtha-Hari-JFA, DM, DU}.  The following result identifies a class of holomorphic test functions for $\theta(\mathbb{D}^2)$.
	\begin{lemma}\label{Test}
		The set $\Psi=\{\psi(\beta, \cdot, \cdot): \beta \in \bT\}$ is a collection of test functions for the domain $\theta(\bD^2)$ where 
		$$
		\psi(\beta, p_1, p_2)= \frac{2\beta p_2^t- p_1}{2-\beta p_1} \text{ for } (p_1, p_2) \in \theta(\bD^2).
		$$ 
	\end{lemma}
	\begin{proof}	
		Recall that, the symmetrized bidisc, $\bG_2$ is biholomorphic to $\bD^2/G(1,1,2)$. We first note that the map $\pi: \theta(\bD^2) \rightarrow \bG_2$ given by $\pi(p_1, p_2) = (p_1, p_2^t)$ is a proper holomorphic map. A well known characterization of $\bG_2$ states that, $(q_1, q_2) \in \bG_2$ if and only if $|q_2|<2$ and $|\varphi(\beta, q_1, q_2)|<1$ for every $\beta \in \bT$, where $\varphi(\beta, q_1, q_2)= (2\alpha q_2- q_1)/ (2-\beta q_1)$; see \cite{SYM_GEO}.

		Clearly, for each fixed point $\beta \in\bT$, the map  
		$$\psi(\beta, p_1, p_2)= \varphi(\beta, \cdot, \cdot) \circ \pi(p_1, p_2)= \frac{2\beta p_2^t- p_1}{2-\beta p_1}.
		$$ is holomorphic.  Thus it is easy to verify that $(p_1, p_2) \in \theta(\bD^2)$ if and only if $|p_1|<2$ and $|\psi(\beta, p_1, p_2)|<1$ for each $\beta \in \bT$. Therefore $\Psi=\{\psi(\beta, \cdot, \cdot): \beta \in \bT\}$ is a collection of test functions for the domain $\theta(\bD^2)$.
	\end{proof}
	
	Thus our collection of test functions $\Psi$ is obviously homeomorphic $\bT$. Hence, $C_b(\Psi)$ can be identified with $C(\bT)$. The following result shows that our test functions are rational inner functions.
	
	\begin{lemma}
		For each fixed $\beta\in\bT$, the map	
		$$\psi(\beta, p_1, p_2)=  \frac{2\beta p_2^t- p_1}{2-\beta p_1}$$ 
		is rational inner on $\theta(\bD^2)$.
	\end{lemma}
	\begin{proof}
		Define the polynomial
		$$
		g(p_1, p_2) = 2 - \beta p_1.
		$$
		Note that $g$ has no zeros on $ \theta(\mathbb{D}^2)$. It is straightforward to verify that
		$$
		\overline{g\left(\frac{\overline{p_1}}{\overline{p_2}^t}, \frac{1}{\overline{p_2}}\right)} = \frac{2p_2^t - \overline{\beta}p_1}{p_2^t}.
		$$
		Therefore, our function $\psi(\beta, p_1, p_2)$ can be expressed as
		$$
		\overline{\beta} p_2^t \frac{\overline{g\left(\frac{\overline{p_1}}{\overline{p_2}^t}, \frac{1}{\overline{p_2}}\right)}}{g(p_1, p_2)}.
		$$
		Thus, by Theorem \ref{Theorem A}, the function $\psi(\beta, p_1, p_2)$ is rational inner.
		
	\end{proof}

	As a consequence of the two preceding lemmas, and with the help of Theorem 2.3 from \cite{DM} (see also Theorem 1 from \cite{Tirtha-Ani-Vik}), we can derive the realization formula. However, we will not state it here since it is not needed for our purposes.

	Let $\boldsymbol{z}_1, \boldsymbol{z}_2,\dots,\boldsymbol{z}_n$ be points (initial nodes) in $\Omega\subset\mathbb{C}^2$ and $A_1, A_2,\dots, A_n$ be points (final nodes) in the closed operator-norm unit ball of $\mathbb{M}_{N}(\mathbb{C})$. The Pick-Nevanlinna interpolation problem asks for necessary and sufficient conditions for the existence of a function $g$ which is analytic in $\Omega$ with $\|g(\boldsymbol{z})\|\leq{1}$ for all $\boldsymbol{z}\in\Omega$ and interpolates the data i.e.,
	$$g(\boldsymbol{z}_j)=A_j\quad \text{ for } j=1,2,\dots, n.$$
	When $\Omega=\mathbb{D}$, the problem was solved over a century ago, \cite{Nev, PICK}.  Since then, it has been studied in various domains and different contexts; see \cite{AM-Book,  SYM_Real, Ball-Bolo,  Tirtha-Hari-JFA, Jury-Knese-McC}. The following theorem gives a criteria for solvablity of the Pick-Nevanlinna interpolation problem in $ \theta(\bD^2)$. 
	\begin{lemma}\label{PN}
		Suppose $\boldsymbol{p}_j \in \theta(\bD^2)$ and $A_j \in \mathbb{M}_{N}(\mathbb{C})$ for $1\leq j \leq n$. Then the interpolation problem $\boldsymbol{p}_j \mapsto A_j$ is solvable by a function in $\mathcal{S}\mathcal{A}_{\Psi}(\bC^N)$ if and only if there exists a completely positive kernel 
		$$\Gamma: \{\boldsymbol{p}_1,\dots,\boldsymbol{p}_n \} \times \{\boldsymbol{p}_1, \boldsymbol{p}_2,\dots,\boldsymbol{p}_n \} \rightarrow \mathcal{B}(C_b(\Psi), \cB(\bC^N))$$ such that 
		\begin{align}\label{Eq:interpolation}
			I_N - A_j^* A_i = \Gamma(\boldsymbol{p}_i, \boldsymbol{p}_j) \left(1- \overline{\psi(\cdot,\boldsymbol{p}_j)} \psi(\cdot, \boldsymbol{p}_i) \right) \text{ for } 1\leq i, j \leq n. 
		\end{align}
	\end{lemma}
	The proof follows from Lemma \ref{Test} and Theorem  2.3 of \cite{DM}. \newline

	\begin{definition}
		We say that a rational  map $\Phi=((\Phi_{ij})):\theta(\bD^2) \rightarrow \mathbb{M}_{N}(\mathbb{C})$ is
		
		\item inner if $$\Phi(\boldsymbol{p})^*\Phi(\boldsymbol{p})=I_{N}  \ \text{ for } \mu\text{-a.e. } \boldsymbol{p} \text{ in } \partial\theta(\bD^2).$$
	\end{definition} The following result provides existence of rational inner solution for a solvable Pick-Nevanlinna interpolation data, which we require for the approximation result. For a specific case of this theorem, refer to \cite{DKS2}.
	
	\begin{proposition}\label{Rational-Solution}
		A solvable matrix Pick-Nevanlinna interpolation problem in $\mathcal{S}\mathcal{A}_{\Psi}(\bC^N)$  with initial nodes $\boldsymbol{p}_1,\dots,\boldsymbol{p}_n$  in $\theta(\mathbb{D}^2)$ and the final nodes $A_1, \cdots, A_n$ in the closed operator-norm unit ball of  $\mathbb{M}_{N}(\mathbb{C})$ has a rational inner solution.
	\end{proposition}
	\begin{proof}
		Invoke Lemma \ref{PN} to get a completely positive kernel 
		$$\Gamma: \{\boldsymbol{p}_1,\dots,\boldsymbol{p}_n \} \times \{\boldsymbol{p}_1, \boldsymbol{p}_2,\dots,\boldsymbol{p}_n \} \rightarrow \mathcal{B}(C_b(\Psi), \cB(\bC^N))$$ such that 
		\begin{align*}
			I_N - A_j^* A_i = \Gamma(\boldsymbol{p}_i, \boldsymbol{p}_j) \left(1- \overline{\psi(\cdot,\boldsymbol{p}_j)} \psi(\cdot, \boldsymbol{p}_i) \right) \text{ for } 1\leq i, j \leq n. 
		\end{align*}
		Using Kolmogorov decomposition for $\Gamma$, we have a Hilbert space $\cH$, unital $*$-representation $\Delta : C_b(\Psi) \to \cB(\cH)$ and a function $h:\{\boldsymbol{p}_1, \dots, \boldsymbol{p}_n \} \rightarrow \cB(\bC^N, \cH)$ such that 
		\begin{align}\label{Eq:Kolmogorov}
			\Gamma(\boldsymbol{p}_i, \boldsymbol{p}_j)(\delta)= h(\boldsymbol{p}_j)^* \Delta (\delta)  h(\boldsymbol{p}_i)  \text{ for each } \delta \in C_b(\Psi).
		\end{align}
		Combining \eqref{Eq:interpolation} and \eqref{Eq:Kolmogorov}, we have
		\begin{align}\label{Eq:lurking-isometry}
			I_N + h(\boldsymbol{p}_j)^* \Delta( \overline{\psi(\cdot,\boldsymbol{p}_j)})   \Delta(\psi(\cdot, \boldsymbol{p}_i)) h(\boldsymbol{p}_i)  = A_j^* A_i +  h(\boldsymbol{p}_j)^* h(\boldsymbol{p}_i)
		\end{align}
		Let \begin{align*}
			\cH_1 = \overline{\operatorname{span}} \left\lbrace 
			\begin{pmatrix}
				y \\
				\Delta( \psi(\cdot,\boldsymbol{p}_j)) h(\boldsymbol{p}_j)  y
			\end{pmatrix}: y \in \bC^N \text{ and } 1\leq j \leq n  \right \rbrace
		\end{align*} 
		and \begin{align*}
			\cH_2 = \overline{\operatorname{span}} \left\lbrace 
			\begin{pmatrix}
				A_j y \\
				h(\boldsymbol{p}_j) y
			\end{pmatrix}: y \in \bC^N \text{ and } 1\leq j \leq n  \right \rbrace.
		\end{align*}
		Note that $\cH_1$ and $\cH_2$ are two finite dimensional subspaces of the Hilbert space $\bC^N \oplus \cH$. Again by a lurking-isometry argument, from \eqref{Eq:lurking-isometry} we obtain an isometry $V: \cH_1 \rightarrow \cH_2$ such that 
		\begin{align*}
			V \begin{pmatrix}
				y \\
				\Delta ( \psi(\cdot,\boldsymbol{p}_j)) h(\boldsymbol{p}_j)  y
			\end{pmatrix} = \begin{pmatrix}
				A_j y \\
				h(\boldsymbol{p}_j) y
			\end{pmatrix}
		\end{align*} 
		for every $y\in \bC^N$ and $1\leq j \leq n$.
		
		Thus the isometry can be extended to $\widetilde{V}: \bC^N \oplus \cN \rightarrow \bC^N \oplus \cN $ where $\cN$ is the following finite dimensional subspace of $\cH$:
		\begin{align*}
			\cN =\overline{\operatorname{span}} \left \lbrace \Delta( \psi(\cdot,\boldsymbol{p}_j)) h(\boldsymbol{p}_j) x, h(\boldsymbol{p}_k) y: x, y \in \bC^N \text{ and } j, k=1, \dots, N  \right \rbrace.
		\end{align*}

		We write 
		$$
		\widetilde{V} = \begin{bmatrix}
			A & B\\
			C & D
		\end{bmatrix} : \bC^N \oplus \cN \rightarrow \bC^N \oplus \cN.
		$$
		Thus 
		\begin{align}\label{Eq:realization}
			\begin{bmatrix}
				A & B\\
				C & D
			\end{bmatrix} \begin{pmatrix}
				y \\
				\Delta( \psi(\cdot,\boldsymbol{p}_j)) h(\boldsymbol{p}_j)  y
			\end{pmatrix} = \begin{pmatrix}
				A_j y \\
				h(\boldsymbol{p}_j) y
			\end{pmatrix} \text{ for } y\in \bC^N \text{ and } 1\leq j \leq n.
		\end{align} 
		Consider the function 
		\begin{align}
			f(\boldsymbol{p}) = A + B \Delta(\psi(\cdot, \boldsymbol{p})) \left(I- D \Delta (\psi(\cdot, \boldsymbol{p})) \right)^{-1} C. 
		\end{align}
		Since for $\boldsymbol{p} \in \theta(\bD^2)$, $|\psi(\cdot, \boldsymbol{p})|<1$ and hence $\|D \Delta (\psi(\cdot, \boldsymbol{p}))\|<1$.  It is now standard to verify that $f$ is a bounded holomorphic function from $\theta(\mathbb{D}^2)$ into $\mathbb{M}_N(\mathbb{C})$ with a sup-norm no greater than 1, such that, with the help of \eqref{Eq:realization}, it interpolates the given data, i.e., $f(\boldsymbol{p_j}) = A_j$ for each $1 \leq j \leq n$.

		Finally, we note that the function $\psi(\beta, \boldsymbol{p})$ has singularity when $ p_1 =2 \bar{\beta}$ where $\boldsymbol{p}=(p_1, p_2)$ and it can be easily seen that the set $\mathcal{G}= \{\boldsymbol{q}=(q_1, q_2) \in \partial \theta(\bD^2): |q_1|=2 \}$ is a $\mu$-measure zero subset of $\partial \theta(\bD^2)$. Also, for each $\boldsymbol{p} \in \partial \theta(\bD^2) \setminus \mathcal{G}$ and $\beta \in \bT$, 
		$$
		|\psi(\beta, \boldsymbol{p})| = 1. 
		$$
		So, $\Delta(\cdot, \boldsymbol{p})$ is unitary operator for $\boldsymbol{p} \in \partial \theta(\bD^2) \setminus \mathcal{G}$. Therefore,
		$$
		f(\boldsymbol{p})^* f(\boldsymbol{p})=I_N \text{ for } \boldsymbol{p} \in \partial \theta(\bD^2) \setminus \mathcal{G}
		$$
		and hence $f$ is a rational  inner solution of the Pick-Nevanlinna interpolation in the statement of the proposition.
	\end{proof}

	\subsection{Approximation for operator-valued case}
	Recently, it has been established that if the Carath\'eodory approximation theorem holds for the matrix-valued case, then the result can be extended to the operator-valued case using approximation arguments, see \cite[Theorem 3.6]{BBK}. Therefore, we will focus on discussing the Carath\' eodory approximation theorem for the matrix-valued case. It is folklore that Carath\'eodory's approximation theorem for $\bD$ can be proven via the Pick-Nevanlinna interpolation problem, which extends naturally to the matrix-valued case, see for example \cite{B_J_K}. Having Proposition \ref{Rational-Solution} in hand,  we have the following result.
	\begin{thm}
		Any holomorphic function $f$ in $\mathcal{S}\mathcal{A}_{\Psi}(\bC^N)$, can be approximated (uniformly on compact subsets) by matrix-valued rational inner functions. 
	\end{thm}
	\begin{proof}
		We sketch the proof. Choose a countable dense subset $\{ \boldsymbol{\lambda}_1, \boldsymbol{\lambda}_2, \dots \}$ of $\theta(\mathbb{D}^2)$ and set up the interpolation problem $\{ \boldsymbol{\lambda}_j \to f(\boldsymbol{\lambda}_j) \}_{j=1}^n$, for each $n \geq 1$. Since the data is solvable by $f$, according to Proposition \ref{Rational-Solution}, there exists a rational inner  function $f_n$ on $\theta(\mathbb{D}^2)$ such that
		$$f_n(\boldsymbol{\lambda}_j)=f(\boldsymbol{\lambda}_j) \quad \text{ for all } j=1,2,...,n.$$
		Note that each $f_n$ is bounded by $1$. Thus, the family $$\cF:=\{f_n: n\in\mathbb{N}\}$$ is uniformly bounded. By Montel's Theorem, there exists a subsequence of $\cF$ that converges uniformly on each compact subset of $\theta(\mathbb{D}^2).$ An application of Arzela-Ascoli theorem completes the proof.
	\end{proof}
	\begin{remark}
		It is currently unknown whether 
		$\mathcal{S}\mathcal{A}_{\Psi}(\mathbb{C}^N)$ 
		is the same as the norm unit ball of $N \times N$ matrix-valued holomorphic functions on $\theta(\mathbb{D}^2)$. Establishing this requires proving the existence of rational dilation. However, since the existence of rational dilation is not known, we plan to investigate this further in future work.
	\end{remark}

	\noindent \textbf{Funding:} The first author's work is supported by the Prime Minister's Research Fellowship, Government of India (PMRF-21-1274.03), while the second author is currently supported by a PIMS Postdoctoral Fellowship.
	
	A particular case of this work was initiated during the second author's tenure as a research associate in the Department of Mathematics at the Indian Institute of Science, Bangalore. During this period, his research was supported by the INSPIRE grant (Ref: DST/INSPIRE/04/2019/000769), awarded to Dr. Srijan Sarkar by the Department of Science \& Technology (DST), Government of India


\begin{thebibliography}{99}
		
		\bibitem{AM-Book} J. Agler and J. E. McCarthy, {\em  Pick interpolation and Hilbert function spaces}, Graduate Studies in Mathematics, 44. American Mathematical Society, Providence, RI, 2002.
		
		\bibitem{AMS} J. Agler, J. E. McCarthy and M. Stankus, {\em Toral algebraic sets and function theory on polydisks,} J. Geom. Anal.
		16 (2006), 551 – 562.
		
		\bibitem{COMM-LIFT} J. Agler and N. J. Young, {\em A commutant lifting theorem for a domain in $\mathbb{C}^2$ and spectral interpolation}, J. Funct. Anal. 161 (1999), no. 2, 452–477.
		
		\bibitem{SYM_GEO} J. Agler and N. J. Young, {\em The hyperbolic geometry of the symmetrized bidisc}, J. Geom. Anal. 14 (2004), no. 3, 375-403.
		
		
		\bibitem{SYM_Real} J. Agler and N. J. Young, {\em Realization of functions on the symmetrized bidisc}, J. Math. Anal. Appl. 453(1) (2017), no. 1, 227-240.
		
		\bibitem{B_J_K} D. Alpay, T. Bhattacharyya, A. Jindal and P. Kumar, {\em A dilation theoretic approach to approximation by inner functions}, Bull. Lond. Math. Soc.  55 (6) (2023), 2840-2855.
		
		\bibitem{Ball-Bolo}  J. A. Ball and V. Bolotnikov, {\em Realization and interpolation for Schur-Agler-class functions on domains with matrix polynomial defining function in $\mathbb{C}^n$}, J. Funct. Anal.,
		213(1): 45–87, 2004.
		
		\bibitem{BH}  J. A. Ball and M. D. Guerra Huamán, {\em Test functions, Schur-Agler classes and transfer-function realizations: the matrix-valued setting}, Complex Anal. Oper. Theory 7 (2013), no. 3, 529-575.
		
		\bibitem{Ber-Gar-Mar} A. Bernard, J. B. Garnett and D. E. Marshall, {\em Algebras generated by inner functions}, J. Functional Analysis 25 (1977), no. 3, 275–285.
		
		\bibitem{Tirtha-Hari-JFA} T. Bhattacharyya and H. Sau, {\em Holomorphic functions on the symmetrized bidisk--Realization, interpolation and extension}, J. Funct. Anal. 274 (2018), no. 2, 504-524.
		
		\bibitem{BBK} T. Bhattacharyya, M. Bhowmik and P. Kumar, {\em Herglotz's representation and Carath\'eodory's approximation}, Bull. Lond. Math. Soc. 56 (2024), no. 12, 3752–3776.
		
		\bibitem{BBS} T. Bhattacharyya, M. Bhowmik and H. Sau, {\em Hankel operators and Projective Hilbert modules on quotients of bounded symmetric domains}, (2024), arXiv:2409.04582v1.
		
		\bibitem{Tirtha-Ani-Vik} T. Bhattacharyya, A. Biswas and V. S. Chandel, {\em On the Nevanlinna problem: characterization of all Schur-Agler class solutions affiliated with a given kernel}, Studia Math.255(2020), no.1, 83–107.
		
		\bibitem{BK} M. Bhowmik and P. Kumar, {\em Factorization of functions in the Schur-Agler class related to test functions}, Proc. Amer. Math. Soc. 152 (2024), no. 9, 3991–4001.
		
		\bibitem{BK2} M. Bhowmik and P. Kumar, {\em Herglotz representation for operator-valued function on a set associated with test functions, } to appear in Proc. Amer. Math. Soc. and DOI: 10.1090/proc/17166. 
		
		
		\bibitem{BPS} K. Bickel, J. Pascoe, and A. Sola, {\em Singularities of rational inner functions in higher dimensions}, Amer. J. Math.144(2022), no.4, 1115–1157.
		
		\bibitem{BPS2} K. Bickel, J. Pascoe, and A. Sola, {\em Derivatives of rational inner functions: geometry of singularities and integrability at the boundary}, Proc. Lond. Math. Soc. (3) 116 (2018), no.2, 281–329.
		
		
		
		\bibitem{BDGR-adv} S. Biswas, S. Datta, G. Ghosh and S. Shyam Roy, {\em Reducing submodules of Hilbert modules and Chevalley-Shephard-Todd theorem}, Adv. Math. 403 (2022), Paper No. 108366, 54 pp.
		
		\bibitem{SR} S. Biswas and S. Shyam Roy, {\em Functional models of $\Gamma_n$-contractions and characterization of $\Gamma_n$-isometries}, J. Funct. Anal. 266 (2014), no. 10, 6224–6255.
		
		\bibitem{Chen-JFA} L. Chen, S. G. Krantz and Y. Yuan, {\em $L^p$ regularity of the Bergman projection on domains covered by the polydisc}, J. Funct. Anal. 279 (2020), no. 2, 108522, 20 pp.
		
		\bibitem{Chevalley} C. Chevalley, {\em Invariants of finite groups generated by reflections}, Amer. J. Math. 77 (1955), 778–782.
		
		
		\bibitem{Cos} C. Costara, {\em The $2\times 2$ spectral Nevanlinna-Pick problem}, J. London Math. Soc. (2) 71 (2005), no. 3, 684-702.
		
		\bibitem{Cos2} C. Costara, {\em On the spectral Nevanlinna-Pick problem}, Studia Math. 170 (2005), no. 1, 23–55.
		
		\bibitem{DM} M. A. Dritschel and S. McCullough, {\em Test functions, kernels, realizations and interpolation}, Operator theory, structured matrices, and dilations, 153–179, Theta Ser. Adv. Math., 7, Theta, Bucharest, 2007.
		
		\bibitem{DU} M. A. Dritschel and B. Undrakh, Rational dilation problems associated with constrained algebras. J. Math. Anal. Appl. 467 (2018), no. 1, 95–131.
		
		\bibitem{Edigarian-Zwonek} A. Edigarian and W. Zwonek, {\em Geometry of the symmetrized polydisc}, Arch. Math. (Basel), 84 (2005), no. 4, 364-374.
		
		\bibitem{Fisher} S. Fisher, {\em The convex hull of the finite Blaschke products}, Bull. Amer. Math. Soc. 74 (1968), 1128–1129.
		
		
		\bibitem{Gamelin} T. W. Gamelin, {\em Uniform algebras}, AMS Chelsea Publishing, American mathematical society, Providence 2005.
		
		\bibitem{GN-BDSM} G. Ghosh and E. K. Narayanan, {\em Toeplitz operators on the weighted Bergman spaces of quotient domains}, Bull. Sci. Math. 188 (2023), Paper No. 103340, 29 pp.
		
		\bibitem{GSR} G. Ghosh and S. Shyam Roy, {\em Toeplitz operators on the proper images of bounded symmetric domains}, https://arxiv.org/abs/2405.08002.
		
		\bibitem{Hoff} K. Hoffman, {\em Banach spaces of analytic functions}, Prentice-Hall Series in Modern Analysis, Prentice-Hall, Inc., Englewood Cliffs, N.J., 1962, xiii+217 pp.
		
		\bibitem{Jury-Knese-McC} M. T. Jury, G. Knese, and S. McCullough, {\em Nevanlinna-Pick interpolation on distinguished varieties in the bidisk}, J. Funct. Anal. 262 (2012), no. 9, 3812-3838.
		
		\bibitem{Wick-CJM} Z. Huo and B. D. Wick, {\em $L^p$  regularity of the Bergman projection on the symmetrized polydisc}, to appear in Canad. J. Math. DOI: https://doi.org/10.4153/S0008414X24000634.
		
		\bibitem{Knese} G. Knese, {\em Schur-Agler class rational inner functions on the tridisk,} Proc. Amer. Math. Soc.139(2011), no.11, 4063–4072.
		
		\bibitem{Knese2} G. Knese, {\em Rational inner functions on the polydisk -- a survey}, to appear in the Multivariable Operator Theory chapter in the 2nd edition of the Springer reference work "Operator Theory", arXiv:2409.14604.  
		
		\bibitem{Kon} S. H. Kon, {\em Inner functions  and the maximal ideal space of $L^\infty(\mathbb{T}^n)$}, J. Austral. Math. Soc. Ser. A 26 (1978), no. 2, 175-178.
		
		\bibitem{Kon2} S. H. Kon, {\em Inner functions on the polydisc}, Proc. Amer. Math. Soc. 73 (1979), no. 3, 338-340.
		
		\bibitem{Koranyi-Vagi} A. Korányi and S. Vági, {\em Rational inner functions on bounded symmetric domains}, Trans. Amer. Math. Soc. 254 (1979), 179-193.
		
		\bibitem{Kos-Zow} \L{}. Kosi\'nski and W. Zwonek, {\em Proper holomorphic mappings vs. peak points and Shilov boundary}, Ann. Pol. Math. 107 (2013) 97--108.
		
		\bibitem{KZ-Geo} \L{}. Kosi\'nski and W. Zwonek,  {\em Extremal holomorphic maps in special classes of domains}, Ann. Sc. Norm. Super. Pisa Cl. Sci. (5) 16 (2016), no. 1, 159-182.
		
		\bibitem{KZ2} \L{}. Kosi\'nski and W. Zwonek, {\em Nevanlinna-Pick problem and uniqueness of left inverses in convex domains, symmetrized bidisc and tetrablock}, J. Geom. Anal. 26 (2016), no. 3, 1863–1890.
		
		\bibitem{DKS2} B. Krishna Das, P. Kumar and H. Sau, {\em Determining sets for holomorphic functions on the symmetrized bidisk}, Canad. Math. Bull. 66(2023), no.3, 984–996.
		
		\bibitem{DKS} B. Krishna Das, P. Kumar and H. Sau, {\em Distinguished varieties and the Nevanlinna-Pick interpolation problem on the symmetrized bidisk}, to appear in Math. Z. arXiv:2104.12392.
		
		\bibitem{DS} B. Krishna Das, and H. Sau, {\em Pure inner functions, distinguished varieties and toral algebraic commutative contractive pairs}, Proc. Amer. Math. Soc. 152 (2024), no. 3, 1067–1081.
		
		\bibitem{Mar} D. E. Marshall, {\em Blaschke products generate $H^{\infty}$}, Bull. Amer. Math. Soc. 82 (1976), no. 3, 494-496.
		
		\bibitem{MRZ} G. Misra, S. S. Roy and G. Zhang, {\em Reproducing kernel for a class of weighted Bergman spaces on the symmetrized polydisc}, Proc. Amer. Math. Soc. 141 (2013), no. 7, 2361–2370.
		
		\bibitem{Morton} H. R. Morton, {\em Symmetric products of the circle}, Proc. Cambridge Philos. Soc. 63(1967), 349-352.
		
		\bibitem{Nev} R. Nevanlinna, {\em \"Uber beschr\"ankte Funktionen, die in gegebenen Punkten vorgeschriebene Werte annehmen},  Ann. Acad. Sci. Fenn. Sel A. 13 (1919), 1-72.
		
		
       \bibitem{Pal-Shalit} S. Pal and O. M. Shalit, {\em Spectral sets and distinguished varieties in the symmetrized bidisc}, J. Funct. Anal. 266 (2014), 5779-5800.
		 
		\bibitem{Pfister} A. Pfister, {\em \"{U}ber das Koeffizientenproblem der beschr\"{a}nkten Funktionen von zwei Ver\"{a}nderlichen}, Math. Ann. 146 (1962), 249–262.
		
		\bibitem{PICK} G. Pick, {\em \"{U}ber die Beschr\"{a}nkungen analytischer Funktionen, welche durch vorgegebene Funktionswerte bewirkt werden}, Math. Ann. 77 (1916), 7–23.
		
		\bibitem{Range} R. M. Range, {\em A small boundary for $H^\infty$ on the polydisc}, Proc. Amer. Math. Soc. 32(1972), 253-255.
		
		\bibitem{Rudin} W. Rudin, {\em Function theory in polydiscs}, W. A. Benjamin, Inc., New York-Amsterdem (1969), vii+188 pp.
		
		\bibitem{Rud_CAG} W. Rudin, {\em Convex combinations of unimodular functions}, Bull. Amer. Math. Soc. 75 (1969), 795–797.
		
		\bibitem{Rudin-IUMJ} W. Rudin, {\em Proper holomorphic maps and finite reflection groups}, Indiana Univ. Math. J. 31 (1982) 701--720. 
		
		
		\bibitem{Rudin_EB} W. Rudin, {\em Function theory in the unit ball of $\mathbb{C}^n$}, reprint of the 1980 edition. Classics in Mathematics. Springer-Verlag, Berlin (2008), xiv+436 pp. ISBN: 978-3-540-68272-1.
		
		\bibitem{Rudin-Stout} W. Rudin and E. L. Stout, {\em Bounday properties of functions of several complex variables}, J. Math. Mech. 14 (1965), 991–1005.
		
		\bibitem{Shephard-Todd} G. C. Shephard and J. A. Todd, {\em Finite unitary reflection groups}, Canad. J. Math. 6 (1954), 274–304.
		
		
	\end{thebibliography}
\end{document}